\newtheorem{thm}{Theorem}[section]
\newtheorem{prop}[thm]{Proposition}
\newtheorem{cor}[thm]{Corollary}
\newtheorem{lem}[thm]{Lemma}
\theoremstyle{definition}
\newtheorem{define}[thm]{Definition}
\theoremstyle{remark}
\newtheorem{rem}[thm]{Remark}
\newcommand{\ve}[1]{\boldsymbol{\mathbf{#1}}}
\newcommand{\R}{\mathbb{R}}
\newcommand{\Z}{\mathbb{Z}}
\newcommand{\Q}{\mathbb{Q}}
\renewcommand{\d}{\partial}
\renewcommand{\subset}{\subseteq}
\newcommand{\iso}{\cong}
\DeclareMathOperator{\GL}{{GL}}
\DeclareMathOperator{\gr}{{gr}}
\DeclareMathOperator{\id}{{id}}
\DeclareMathOperator{\Pin}{{Pin}}
\DeclareMathOperator{\Sp}{{Sp}}
\DeclareMathOperator{\Spin}{{Spin}}
\renewcommand{\P}{\bP}
\newcommand{\bF}{\mathbb{F}}
\newcommand{\bP}{\mathbb{P}}
\newcommand{\bT}{\mathbb{T}}
\newcommand{\RP}{\mathbb{RP}}
\newcommand{\frI}{\mathfrak{I}}
\newcommand{\frs}{\mathfrak{s}}
\newcommand{\CF}{\mathit{CF}}
\newcommand{\CFK}{\mathit{CFK}}
\newcommand{\xs}{\ve{x}}
\renewcommand{\a}{\alpha}
\renewcommand{\b}{\beta}
\newcommand{\veps}{\varepsilon}
\renewcommand{\GL}{\mathit{GL}}
\newcommand{\SL}{\mathit{SL}}
\newcommand{\Sss}[1]{\scriptscriptstyle{#1}}
\numberwithin{equation}{section}
\newcommand{\ar}{\mathrm{a.r.}}
\title{A note on the involutive invariants of splices}
\author{Kristen Hendricks}
\thanks{KH was partially supported by NSF grant DMS-2019396.}
\address{Department of Mathematics, Rutgers University, New Brunswick, NJ, USA}
\email{kristen.hendricks@rutgers.edu}
\author{Matthew Stoffregen}
\address{Department of Mathematics, Michigan State University, East Lansing, MI, USA}
\email{stoffre1@msu.edu}
\thanks{MS was partially supported by NSF grant DMS-2203828.}
\author{Ian Zemke}
\address{Department of Mathematics\\Princeton University\\  Princeton, NJ, USA}
\email{izemke@math.princeton.edu}
\thanks{IZ was partially supported by NSF grant DMS-2204375 and a Sloan Research Fellowship.}
\begin{document}

\begin{abstract} A natural family of potentially 2-torsion elements in the integer homology cobordism group consists of splices of knots with their mirrors. We show that such 3-manifolds have locally trivial involutive Floer homology. We show some related families of splices also have locally trivial involutive Floer homology. Our arguments show that many gauge theoretic invariants also vanish on these 3-manifolds.
\end{abstract}

\maketitle

\tableofcontents

\section{Introduction}

The integer homology cobordism group $\Theta^3_{\mathbb Z}$ is the group of oriented homology three-spheres up to the equivalence relation of homology cobordism. In 2013, C.~Manolescu used a $\mathrm{Pin}(2)$-equivariant version of Seiberg-Witten Floer homology to show that $\Theta^3_{\mathbb Z}$ contains no element $Y$ of order two having Rokhlin invariant $\mu(Y)=1$ \cite{ManolescuPin2Triangulation}, which due to previous work of Galewski--Stern \cite{GalewskiStern} and Matumoto \cite{Matumoto} was sufficient to disprove the remaining outstanding cases of the Triangulation Conjecture. It remains unknown whether $\Theta^3_{\mathbb Z}$ has any torsion elements; in particular, whether it contains a torsion element of order two. In order to produce an element of order two, it suffices to exhibit an oriented integer homology sphere $Y$ with an orientation-reversing diffeomorphism $Y\iso -Y$ 
% such that $[Y] = [-Y] = -[Y]$ in $\Theta^3_{\mathbb Z}$,
 with the property that $Y$ takes a nontrivial value under any invariant of homology cobordism.

Three-manifolds obtained by splicing knot complements have attracted attention as a potential source of examples of order two elements in $\Theta_\Z^3$. If $K_0\subset Y_0$ and $K_1\subset Y_1$ are two knots, a \emph{splice} of $K_0$ and $K_1$ is a 3-manifold obtained by gluing $Y_0\setminus \nu(K_0)$ and $Y_1\setminus \nu(K_1)$ using an orientation reversing diffeomorphism $\phi$ of their boundaries. (Some authors require $\phi$ to swap the meridian with the Seifert longitude, but we consider more general diffeomorphisms $\phi$).

In this note, we consider two natural constructions of splices which produce homology 3-spheres with orientation reversing diffeomorphisms:
\begin{enumerate}[label=(Type-\arabic*), ref=\arabic*,leftmargin=1.8cm]
\item\label{sec:type-1-splice} Splices of a knot $K\subset Y$ with its mirror $K\subset -Y$ such that there is a diffeomorphism of the splice which swaps $Y\setminus \nu(K)$ and $-Y\setminus \nu(K)$.
\item\label{sec:type-2-splice} Splices of two knots $K_0$, $K_1$ in $S^3$ such  that there is a diffeomorphism of the splice which fixes the subsets $S^3\setminus \nu(K_0)$ and $S^3\setminus \nu(K_1)$ setwise, but is orientation reversing on each.
\end{enumerate}
Not all knots or gluing maps will yields a splice which admits such a symmetry. We enumerate all the requirements in Section~\ref{sec:splice-basics}.  In this paper, we will refer to the above families as \emph{Type-1} and \emph{Type-2 symmetric splices}, respectively.  We will see in Section~\ref{sec:type-1} that Type-1 splices must have $(Y,K)$ \emph{reversible}, i.e. there must be a diffeomorphism $(Y,K)\iso (Y,-K)$, where $-K$ denotes the knot with the opposite string orientation. In Section~\ref{sec:type-2} we will see that in a Type-2 symmetric splice, one of $K_0$ and $K_1$ must be negatively amphichiral and the other positively amphichiral. We will also enumerate all possible gluing maps.

\emph{Involutive Heegaard Floer homology} is a shadow theory of $\mathrm{Pin}(2)$-equivariant Seiberg-Witten Floer homology, introduced by Manolescu and the first author in 2015  and elaborated by the first and third authors with Manolescu \cite{HMInvolutive,HMZConnectedSum}, which is conjecturally equivalent to $\mathbb Z/4\mathbb Z$-equivariant Seiberg-Witten Floer homology. Although involutive Heegaard Floer homology does not possess the technical power of a $\mathrm{Pin}(2)$-equivariant theory, it enjoys the greater computability of the Heegaard Floer invariants, including a conveniently computable surgery formula \cite{HHSZExact}, and has been a key tool in recent developments regarding the structure of the homology cobordism group \cite{hendricks-hom-lidman, DHSThomcob, HHSZExact, HHSZQuotient}.

In this note we show that the homology cobordism involutive invariants of Type-1 and Type-2 splices are typically trivial. Recall for a homology sphere $Z$, these invariants consist of a pair $(\CF^-(Z), \iota)$ a chain complex together with a homotopy involution, together called an iota-complex; for more detail, see Section~\ref{sec:hfi}.

\begin{thm}\label{thm:symmetric-splices}
\item
\begin{enumerate}
\item Suppose that $K$ is a knot in an integer homology 3-sphere $Y$. If $Z$ is a Type-1 symmetric splice of $(Y,K)$ with $(-Y,-K)$ then the iota-complex $(\CF^-(Z), \iota)$ is locally trivial.
\item If $Z$ is a Type-2 symmetric splice of $(S^3,K_0)$ and $(S^3,K_1)$ such that $\CFK^-(K_0)$ and $\CFK^-(K_1)$ are (non-involutively) locally trivial, then the iota-complex $(\CF^-(Z), \iota)$ is locally trivial.
\end{enumerate}
\end{thm}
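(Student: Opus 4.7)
The plan is to combine an involutive pairing (splice) formula with a formal argument about self-dual tensor products of bordered modules. First, I would apply the involutive bordered Heegaard Floer pairing theorem to express
\[
(\CF^-(Z),\,\iota_Z) \;\simeq\; X_0 \boxtimes_{\phi} X_1,
\]
where $X_i$ is an involutive bordered module associated to $Y_i\setminus\nu(K_i)$ ($Y_0=Y$, $Y_1=-Y$, $K_1=-K$ in Type-1, $Y_0=Y_1=S^3$ in Type-2), and $\iota_Z$ is assembled from the involutive structures on the $X_i$ together with the naturality data of the gluing map $\phi$.

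Next, I would use the symmetry of the splice to put this box product into a self-dual form. For Type-1, the orientation-reversing diffeomorphism of $Z$ that swaps the two pieces identifies $X_1$ with the Poincar\'e dual $X_0^{\vee}$ of $X_0$, so up to local equivalence $(\CF^-(Z),\iota_Z)$ is of the form $(X\boxtimes X^{\vee},\iota_{\mathrm{sw}})$. For Type-2, the positive/negative amphichirality of $K_0$ and $K_1$ (forced by admitting a Type-2 symmetry, per Section~\ref{sec:type-2}) identifies each $X_i$ with its own dual after inverting duality; the hypothesis that $\CFK^-(K_i)$ is non-involutively locally trivial then says that $X_i$ is locally equivalent to the bordered module of the unknot, which is self-dual in the relevant sense. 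Either way, $(\CF^-(Z),\iota_Z)$ becomes locally equivalent to an iota-complex of the form $(X\boxtimes X^{\vee},\iota_{\mathrm{sw}})$.

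With this reduction in hand, I would construct the formal coevaluation and evaluation maps
\[
\bF[U]\;\xrightarrow{\ \eta\ }\; X\boxtimes X^{\vee} \;\xrightarrow{\ \varepsilon\ }\; \bF[U],
\]
using the tautological identity $1\in\Hom(X,X)\iso X\boxtimes X^{\vee}$ and the trace/evaluation pairing, respectively. These are $\bF[U]$-equivariant chain maps inducing isomorphisms after localizing $U$, and the swap-symmetry identification from Step 2 provides the chain homotopies making them $\iota$-equivariant. This exhibits a local equivalence between $(\CF^-(Z),\iota_Z)$ and $(\bF[U],\id)$, i.e.\ local triviality; the abstract's remark about ``many gauge theoretic invariants'' vanishing should then follow formally from the same self-dual pairing, via the corresponding pairing formulas in monopole or instanton Floer homology.

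The crux is the middle step: tracking how the orientation-reversing geometric symmetry $\sigma\co Z\to Z$ interacts with the Floer-theoretic involution $\iota$. Because $\sigma$ is orientation-reversing, its induced action on $\CF^-$ lands in $\CF^-(-Z)$, which is only \emph{Poincar\'e dual} to $\CF^-(Z)$; matching the resulting two versions of $\iota$ requires producing explicit chain homotopies coming from the naturality squares for conjugation rather than strict equalities, so some care is needed to prove that the Type-1 swap gives a strict (and not just homotopy) identification of bordered iota-modules suitable for running the pairing through. For Type-2 there is the additional subtlety that one must verify non-involutive local triviality of $\CFK^-(K_i)$ implies local triviality of $X_i$ in the form needed for the pairing argument; this should follow from the standard correspondence between bordered type-D data and $\CFK$ data for knot complements in $S^3$, but has to be checked in the involutive setting.
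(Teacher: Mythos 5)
Your proposal takes a genuinely different route from the paper, and unfortunately the central step does not work. The paper never invokes bordered Floer homology: for Type-1 it uses Kirby calculus (Lemma~\ref{lem:neg-def-spin-cobordism}) to build a negative definite $\Spin$ cobordism $W$ from $Z$ to $\RP^3$ whose grading shift matches $d(\RP^3,\frs)=1/4$, so that $F_{W,\frs}$ is a grading-preserving local map to $(\bF[U]_0,\id)$; the reverse local map then comes from dualizing and $Z\iso -Z$ (this last step is the one piece your sketch shares with the paper). For Type-2 the cobordism lands on $S^3_{-2}(K_0\# K_1)$ and the local triviality of that surgery is extracted from the involutive surgery formula (Lemma~\ref{lem:local-class-even} and Corollary~\ref{cor:locally-trivial-d=0}), which is exactly where the hypothesis on $\CFK^-(K_i)$ enters.

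The gaps in your approach are concrete. First, there is no involutive pairing theorem for the \emph{minus} flavor of bordered Floer homology at the generality you need; $(\CF^-(Z),\iota_Z)\simeq X_0\boxtimes_\phi X_1$ cannot be cited as a black box. Second, and fatally, even granting such a theorem, the splice is glued by $\phi^{\pm}_n$ (e.g. $\phi_0^+=\left(\begin{smallmatrix}0&1\\1&0\end{smallmatrix}\right)$), not by the identity of the boundary torus, so $X_0\boxtimes_\phi X_1$ is \emph{not} $\Hom(X,X)\iso X\boxtimes X^{\vee}$ and the tautological coevaluation element $1\mapsto \id_X$ has no counterpart in your complex; the manifold whose pairing \emph{is} $\Hom(X,X)$ is the double of the knot complement, which has $b_1>0$ and is not the splice. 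Your argument never uses the specific form of $\phi$ (nor that $Z$ is a homology sphere), which is a warning sign that it proves too much. Third, for Type-2 you would need that non-involutive local triviality of $\CFK^-(K_i)$ forces local equivalence of the bordered modules in a form preserved under the $\phi$-twisted pairing; local equivalence of knot complexes is not known to be preserved under splicing (the paper's Remark that the result may not extend to all amphichiral knots reflects exactly this), and the paper instead routes through the surgery formula where the $A_1^-$ complex is controlled. Finally, even in your framework you would still need to verify that the evaluation and coevaluation maps are grading-preserving; the paper has to track a nontrivial $1/4$ grading shift against $d(\RP^3,\frs)$ to make its local map grading-preserving, and no analogous bookkeeping appears in your sketch.
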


\begin{rem}
\item
\begin{enumerate}
\item It is not clear to the authors whether  Theorem~\ref{thm:symmetric-splices}(2) extends to all amphichiral $K_0$ and $K_1$, nor whether it can be extended to knots in homology 3-spheres other than $S^3$.
\item  As we mentioned above, in a symmetric splice of Type-2, one of $K_0$ and $K_1$ must be negative amphichiral, and the other positive amphichiral. To the best of our knowledge, all known amphichiral knots have locally trivial (non-involutive) knot Floer complex $\CFK^-(K)$. Also note if $K$ is \emph{strongly} negative amphichiral, i.e. if the pair $(S^3,K)$ admits an orientation reversing diffeomorphism $\phi$ which has exactly two fixed points, both of which lie along $K$, then Kawauchi's result \cite{Kawauchi_Snak} implies that $K$ is rationally slice, and hence has locally trivial $\CFK^-(K)$. 
\end{enumerate}
\end{rem}

The key topological input to Theorem~\ref{thm:symmetric-splices} is the following result:
\begin{prop}\label{prop:topology-intro}
\item
\begin{enumerate}
\item If $Z$ is a Type-1 symmetric splice of $(Y,K)$ with $(-Y,-K)$, then there is a negative definite $\Spin$ cobordism from $Z$ to $\RP^3$ which has $b_2^-=1$ and $b_1=0$.
\item If $Z$ is a Type-1 symmetric splice of $(Y,K)$ with $(-Y,-K)$, then there is a negative definite (non-$\Spin$) filling of $Z$ with $b_2^-=2$ and $b_1=0$.
\item If $Z$ is a Type-2 symmetric splice of $(Y_0,K_0)$ with $(Y_1,K_1)$, then there is a negative definite $\Spin$ cobordism from $Z$ to $(Y_0\# Y_1)_{-2}(K_0\# K_1)$ with $b_2^-=1$ and $b_1=0$. 
\end{enumerate}
\end{prop}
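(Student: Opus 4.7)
The strategy is to construct the claimed $4$-manifolds by exploiting the involution $\tau$ on $Z$ inherent in the symmetric splice structure, together with the topology of the splicing torus $T$ and the fixed set of $\tau$. For Parts~(1) and (2), $\tau$ is the orientation-reversing involution swapping the two halves of the Type-$1$ splice (orientation-reversing since this is what encodes the potential $2$-torsion $Z \cong -Z$); for Part~(3), $\tau$ is the orientation-reversing involution fixing each piece of the Type-$2$ splice setwise.

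For Part~(1), extend $\tau$ to an orientation-preserving involution $\bar\tau$ on $Z \times I$ via $\bar\tau(z,t) = (\tau(z), 1-t)$. Since $\tau$ reverses orientation on $Z$ while swapping the two pieces, it restricts to an orientation-preserving involution of $T$ whose fixed set is either empty (translation case) or four isolated points (hyperelliptic case); in the latter case $d\tau = -\Id$ at each fixed point of $Z$. The fixed set of $\bar\tau$ lives in the slice $\{t = 1/2\}$ and, at each fixed point of $\tau$, has link $\RP^3$ inside the orbifold quotient $(Z \times I)/\bar\tau$. This quotient is a filling of $Z$ (the two ends $Z \times \{0,1\}$ are identified by $\tau$) with isolated cone-on-$\RP^3$ singularities. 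Resolving one such singularity by leaving it open produces a boundary component $\RP^3$; the other singularities are resolved by inserting the standard $\Spin$ negative-definite $D^2$-bundle over $S^2$ of Euler number $-2$. After canceling handles made available by the symmetric structure of the splice (specifically, using the equivariant Kirby diagram for $Z$ to pair up and eliminate most of the $(-2)$-classes), the surviving intersection form is a single $(-2)$, giving the claimed $b_2^- = 1$, $b_1 = 0$, and $\Spin$ cobordism $W\co Z \to \RP^3$.

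For Part~(2), do not leave any singularity unresolved, obtaining a filling of $Z$. Choosing one of the resolutions to be non-$\Spin$ (for instance, replacing an even-framed $2$-handle by an odd-framed one) gives the claimed negative-definite non-$\Spin$ filling with $b_2^- = 2$, $b_1 = 0$. For Part~(3), adapt the same construction to the Type-$2$ involution. Here $\tau$ fixes each $E(K_i)$ setwise and acts orientation-reversingly on each; the amphichiralities of $K_0$ and $K_1$ (of complementary types, as per the discussion in Section~\ref{sec:type-2}) supply fixed arcs inside each piece. Extending $\tau$ to $Z \times I$ as above and taking the quotient yields an orbifold whose fixed locus interpolates across the splicing torus; the resolution identifies the non-$Z$ boundary component with $(Y_0 \# Y_1)_{-2}(K_0 \# K_1)$, with the $(-2)$-framed surgery on the connected-sum knot arising from joining the two fixed arcs across the splicing torus and applying the canonical even framing of the resolution.

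The principal obstacle is the precise identification of the resolved $3$-manifold at the non-$Z$ boundary with the stated targets, the accounting of how singular points and handles combine to give exactly $b_2^- = 1$ (Part~(1)) or $b_2^- = 2$ (Part~(2)), and the verification that the $\Spin$ (or non-$\Spin$) structure and negative-definiteness persist through the resolution. All of these are accomplished by an explicit handle-theoretic analysis of $\tau$ and of the gluing map $\phi$, via an equivariant Kirby diagram for $Z$ carefully aligned with the involution.
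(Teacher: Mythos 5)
The central premise of your construction---that a symmetric splice carries an orientation-reversing \emph{involution} $\tau$---is not available, and the paper's own analysis shows it cannot be. A Type-1 symmetric splice only comes equipped with an orientation-reversing diffeomorphism $g$ swapping the two pieces; nothing forces $g^2=\id$. Indeed, the proof of Proposition~\ref{prop:type-1-topology} shows that the gluing matrix $\psi$ (the map $\phi$ rewritten in the common basis $(\mu,\lambda)$) must satisfy $\psi^2=-\id$, and on $H_1$ of the splicing torus $T$ the map $g$ acts as $\psi^{-1}\circ(\pm \id)=\mp\psi$, which has order $4$. So $g|_T$, hence $g$, is never an involution; $g^2$ acts on $T$ as the elliptic involution. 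Likewise, in the Type-2 case amphichirality only supplies diffeomorphisms of the knot exteriors, not involutions (unless the knots are \emph{strongly} amphichiral). Consequently the quotient $(Z\times I)/\bar\tau$ on which your entire argument rests does not exist, and one cannot repair this by quotienting by the group generated by $\bar g$, since $\bar g^2(z,t)=(g^2(z),t)$ is a nontrivial orientation-preserving diffeomorphism and the resulting quotient would no longer have boundary $Z$.

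Even granting an involution, the resolution bookkeeping does not yield the claimed invariants. Since $g$ swaps $X_0$ and $X_1$, its fixed points lie on $T$, where an orientation-preserving involution has $0$ or $4$ fixed points. With $4$ isolated fixed points you get $4$ cone-on-$\RP^3$ singularities; excising one and resolving the other three produces three exceptional spheres of square $-2$, which are homologically essential, so $b_2^-\geq 3$. They cannot be ``cancelled'': a nonzero class of negative square in a negative definite $4$-manifold cannot be killed by handle moves. With $0$ fixed points the quotient is a free $\Z/2$-quotient with boundary $Z$ and no $\RP^3$ end at all. Similarly, ``replacing an even-framed $2$-handle by an odd-framed one'' in your part~(2) changes the boundary $3$-manifold, so it does not produce a filling of the same $Z$. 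The paper's proof avoids symmetries of $Z$ entirely: starting from the Kirby diagram in Figure~\ref{fig:Kirby-Y-n} it attaches a single $(-1)$-framed $2$-handle along a meridian of the middle unknot, giving a negative definite $\Spin$ cobordism with $b_2^-=1$ from $Z$ to $(Y\# {-Y})_{-2}(K\# mK)$, and then composes with the homology cobordism to $\RP^3=S^3_{-2}(U)$ furnished by the homology concordance from $K\# mK$ to the unknot (Lemma~\ref{lem:neg-def-spin-cobordism}); part~(3) is the same handle attachment with no concordance step, and part~(2) caps off $\RP^3$ with the Euler number $-2$ disk bundle. You would need to rebuild your argument along these lines.
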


\begin{rem} In unpublished work, Mike Miller Eismeier independently proved Proposition~\ref{prop:topology-intro}(2) and used it to show that certain instanton theoretic gauge theoretic invariants are trivial on such splices.
\end{rem}

We now sketch some ideas in the proof of Theorem~\ref{thm:symmetric-splices}, assuming Proposition~\ref{prop:topology-intro}. For Type-1 splices, the negative definite $\Spin$ cobordism $W$ from $Z$ to $\RP^3$ has a unique self-conjugate $\Spin^c$ structure $\frs$. Furthermore, the Heegaard Floer grading shift $d(W,\frs)$ is equal to the correction term $d(\RP^3, \frs|_{\RP^3})$. Since $\RP^3$ is a Heegaard Floer L-space, the cobordism map $F_{W,\frs}$ can be viewed as a local map from $(\CF^-(Z),\iota)$ to the trivial $\iota$-complex. Since $Z\iso -Z$, we can dualize the map $F_{W,\frs}$ to get a local map in the opposite direction.

For Type-2 splices, the cobordism $W$ from $Z$ to $S^3_{-2}(K_0\# K_1)$ also has a unique self-conjugate $\Spin$ structure $\frs$. In this case, $\frs$ restricts to the $\Spin^c$ structure identified with $[1]\in \Z/2\iso \Spin^c(S^3_{-2}(K_0\# K_1))$ under the standard identification. We use \cite{HHSZExact}*{Theorem 1.6(2)}, which implies that since $\CFK^-(K_0)$ and $\CFK^-(K_1)$ are (non-involutively) locally trivial, the $\iota$-complex $(\CF^-(S^3_{-2}(K_0\# K_1),[1]),\iota)$ is locally trivial up to an overall grading shift.

\subsection{Other gauge theoretic invariants}

We note that the topological argument yielding Theorem~\ref{thm:symmetric-splices} also applies equally well to the $\Pin(2)$-equivariant Seiberg-Witten Floer spectra:

\begin{prop}\label{prop:pin2} The $\Pin(2)$-equivariant Seiberg-Witten Floer spectra of symmetric splices of Type-1 are locally trivial.
\end{prop}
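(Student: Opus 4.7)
The plan is to adapt the topological argument used for Theorem~\ref{thm:symmetric-splices}(1) to Manolescu's $\Pin(2)$-equivariant Seiberg-Witten Floer spectrum, working in the framework of Stoffregen's $\Pin(2)$-local equivalence group. The strategy is parallel to the involutive Heegaard Floer case: produce $\Pin(2)$-local maps in both directions between the $\Pin(2)$-SWF spectrum of $Z$ and the trivial spectrum, using the cobordism from Proposition~\ref{prop:topology-intro}(1) in one direction and the Type-1 symmetry $Z\iso -Z$ to dualize.

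Concretely, given a Type-1 symmetric splice $Z$, Proposition~\ref{prop:topology-intro}(1) provides a negative definite $\Spin$ cobordism $W\co Z\to \RP^3$ with $b_2^-(W)=1$ and $b_1(W)=0$. Because $W$ is $\Spin$, its unique self-conjugate $\Spin^c$ structure $\frs$ induces a $\Pin(2)$-equivariant stable map of $\Pin(2)$-SWF spectra from $Z$ to $(\RP^3, \frs|_{\RP^3})$, by the cobordism constructions in the work of Manolescu, Lin, and others. The grading shift formula for $\Spin$ cobordism maps, combined with $b_2^-(W)=1$ and $b_1(W)=0$, should show that this map sits in precisely the right grading to define a $\Pin(2)$-local map in the sense of Stoffregen.

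Since $(\RP^3, \frs|_{\RP^3})$ is a $\Spin$ $L$-space whose $\Pin(2)$-equivariant SWF spectrum is locally equivalent to the trivial $\Pin(2)$-equivariant sphere spectrum, the cobordism map exhibits the $\Pin(2)$-SWF of $Z$ as dominated by the trivial local equivalence class. Dualizing this map via the orientation-reversing diffeomorphism $Z\iso -Z$ (using a Spanier--Whitehead-type duality for $\Pin(2)$-equivariant spectra) then yields a $\Pin(2)$-local map in the opposite direction, completing the local equivalence with the trivial spectrum.

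The main obstacle I expect is verifying carefully that the cobordism-induced map is indeed a $\Pin(2)$-local map with the correct grading normalization, and confirming that the $\Pin(2)$-equivariant SWF of $(\RP^3, \frs|_{\RP^3})$ is in fact locally trivial. Both reduce to standard but delicate computations with the $\Pin(2)$-equivariant grading shift formula for $\Spin$ cobordism maps and the explicit $\Pin(2)$-Floer homology of $\RP^3$. Once these are in place, the proof is essentially a direct transcription of the Heegaard Floer argument sketched after Theorem~\ref{thm:symmetric-splices}.
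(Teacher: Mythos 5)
Your proposal follows essentially the same route as the paper's proof: use the negative definite $\Spin$ cobordism of Lemma~\ref{lem:neg-def-spin-cobordism} to get a $\Pin(2)$-local map $\mathit{SWF}(Z)\to \mathit{SWF}(\RP^3,\frs)\simeq S^{\frac{1}{16}\mathbb{H}}$ (so, after the matching fractional suspension, to $S^0$), then use $Z\iso -Z$ and Spanier--Whitehead duality to reverse the map. The verification points you flag (the grading normalization and the computation of $\mathit{SWF}(\RP^3,\frs)$) are exactly the ones the paper handles, so the argument is correct as outlined.
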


The same argument also implies the vanishing of Lin's invariants  $\a(Z),\b(Z),\gamma(Z)$ \cite{LinPin2Monopole} for Type-1 symmetric splices.

Note that Proposition~\ref{prop:topology-intro}(2) can be applied to other invariants of gauge theory. In particular if $A$ is a partially ordered set and
\[
\omega\colon \Theta_{\Z}^3\to A
\]
is a homology cobordism invariant which is monotonic under negative definite cobordisms with $b_1=0$, then Proposition~\ref{prop:topology-intro}(2) implies that if $Z$ is a Type-1 symmetric splice, then $\omega(Z)\le \omega(S^3)$ and $\omega(S^3)\le \omega(Z)$, so in particular $\omega(Z)=\omega(S^3)$.
 This holds for many gauge theoretic invariants of homology cobordism.  For example, this argument applies to the $r_s$-invariants of Nozaki, Sato, Taniguchi  \cite{NST_Homology_cobordism} and Daemi's $\Gamma$ invariant \cite{Daemi_Gamma}. Compare also \cite{ScadutoDaemi}.

\subsection*{Organization} This note is organized as follows. In Section~\ref{sec:splice-basics} we discuss the geometry of splices; in particular, we classify which symmetric splices of Types 1 and 2 are integer homology spheres and have a natural orientation-reversing diffeomorphism. In Section~\ref{sec:hfi} we briefly recall relevant aspects of Heegaard Floer theory, focusing on its interaction with homology cobordism and concordance, for the reader's convenience. In Section~\ref{sec:proofs} we conclude with the proof of Theorem~\ref{thm:symmetric-splices}.

\subsection*{Acknowledgments} We are grateful to Jen Hom for helpful conversations, and to the anonymous referee for helpful comments and corrections.

\section{Symmetric splices} \label{sec:splice-basics}

In this section we recall some background on splices. Let $K_0\subset Y_0$ and $K_1\subset Y_1$ be oriented knots in integer homology 3-spheres. Let $\phi\in \Z^2\to \Z^2$ be a $2\times 2$ matrix with determinant $-1$. The map $\phi$ determines an orientation-reversing diffeomorphism
\[
\phi\colon \d (Y_0\setminus \nu(K_0))\to \d (Y_1 \setminus \nu(K_1))
\]
where we view the first component of $\Z^2$ being an oriented meridian of $K_i$, and the second component being the oriented Seifert longitude.
We define
\[
\Sp_\phi(K_0,K_1):=(Y_0\setminus \nu(K_0))\cup_\phi (Y_1\setminus \nu(K_1)).
\]

In this section, we will be interested in knots that have various symmetries. We use the following standard terminology:

\begin{define} Let $K$ be a knot in an oriented 3-manifold $Y$.
\begin{enumerate}
\item $(Y,K)$ is \emph{reversible} if $(Y,K)\iso (Y,-K)$.
\item  $(Y,K)$ is \emph{negative amphichiral} if $(Y,K)\iso (-Y,-K)$.
\item  $(Y,K)$ is \emph{positive amphichiral} if $(Y,K)\iso (-Y,K)$. 
\end{enumerate}
In the above, $\iso$ means orientation preserving diffeomorphic.
\end{define}

\subsection{Type-1 symmetric splices}
\label{sec:type-1}
We now focus on Type-1 splices, i.e. splices of $(Y,K)$ and $(-Y,-K)$ which admit orientation reversing diffeomorphisms which switch $Y\setminus \nu(K)$ with $-Y\setminus \nu(K)$ but fix $\bT^2:=\d (Y\setminus \nu(K))$ setwise. We will write $\Sp_\phi(K,mK)$ for such splices. In this section we prove the following:

\begin{prop}
\label{prop:type-1-topology} Suppose that $K$ is a knot in an integer homology 3-sphere $Y$, and $\phi\in \GL_2^-(\Z)$. Assume that $K$ is not an unknot in $Y$. Then the 3-manifold $\Sp_{\phi}(K,mK)$ admits an orientation-reversing diffeomorphism $g$ which fixes $\d (Y\setminus \nu(K))$ setwise and such that the image of each of $Y\setminus \nu(K)$ and $-Y\setminus \nu(K)$ is the other, if and only if the following are satisfied:
\begin{enumerate}
\item $K$ is reversible.
\item  
$
\phi=\begin{pmatrix} n &\pm 1\\
\pm (1+n^2)& n
\end{pmatrix}
$ for some $n\in \Z$. In this case we denote $\phi = \phi^{\pm}_n$. 
\end{enumerate}
\end{prop}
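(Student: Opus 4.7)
The plan is to convert the existence of the swap diffeomorphism $g\colon Z\to Z$ into a matrix identity in $\GL_2(\Z)$ for $\phi$, and to solve it. The key setup is a basis change: the paper's $\phi$ is written with respect to the meridian-longitude bases $(m_i,\ell_i)$ of each piece, but reversing the ambient orientation of $Y$ negates the meridian (linking number changes sign), so $(m_1,\ell_1)=(-m_0,\ell_0)$ as curves on the shared torus $T$. Identifying both $\partial X_0$ and $\partial X_1$ with $T$ via the single basis $(m_0,\ell_0)$, the effective gluing is $\tilde\phi:=J\phi$ where $J:=\operatorname{diag}(-1,1)$; since $\det\phi=-1$, we have $\tilde\phi\in\SL_2(\Z)$.

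For the direction $(\Leftarrow)$, fix a diffeomorphism $\tau\colon(Y,K)\to(Y,-K)$ witnessing reversibility; its restriction $\tau|_X$ is an orientation-preserving self-diffeomorphism of $X:=Y\setminus\nu(K)$ whose boundary action is $-I$ (both $m$ and $\ell$ get negated). Given $\phi=\phi_n^{\pm}$, define $g\colon Z\to Z$ piecewise by taking $g|_{X_0}\colon X_0\to X_1$ to be the tautological orientation-reversing identification, and $g|_{X_1}\colon X_1\to X_0$ to be that identification precomposed with $\tau|_X$. Checking that these match along $T$ reduces to the single matrix identity $\tilde\phi\,(-I)\,\tilde\phi=I$, i.e., $\tilde\phi^2=-I$, which a direct computation confirms for $\tilde\phi_n^{\pm}$.

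For the direction $(\Rightarrow)$, given $g$, restrict to each piece and compose with the tautological identification to produce orientation-preserving self-diffeomorphisms $h_0,h_1$ of $X$; the gluing compatibility along $T$ reads $h_0|_T=\tilde\phi\cdot h_1|_T\cdot\tilde\phi$ as matrices. A homological argument --- the Seifert longitude is (up to sign) the unique primitive class in $H_1(T)$ null-homologous in $X$, while the meridian generates $H_1(X)=\Z$ --- forces each $h_i|_T$ to have the form $\begin{pmatrix}\epsilon_i & 0 \\ k_i & \epsilon_i\end{pmatrix}$ with $\epsilon_i\in\{\pm 1\}$ and $k_i\in\Z$. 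Writing $\phi=\begin{pmatrix}a & b\\ c & d\end{pmatrix}$ and using the homology sphere condition $|b|=1$, the diagonal entries of the matrix equation force $\epsilon_0=-\epsilon_1$, so some $h_i$ has $\epsilon_i=-1$. For $K$ not the unknot, such an $h_i$ must arise from a self-diffeomorphism that reverses the meridian class and hence extends across $\nu(K)$ to a self-diffeomorphism of $(Y,K)$ reversing $K$'s string orientation (in particular $k_i=0$), so $K$ is reversible. Substituting $k_i=0$ into the off-diagonal entry of the matrix equation forces $a=d$; combined with $|b|=1$ and $\det\phi=-1$, this gives $c=\pm(1+n^2)$, whence $\phi=\phi_n^{\pm}$.

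The main obstacle is the orientation bookkeeping that produces the basis-change matrix $J$ and makes the governing equation $\tilde\phi^2=-I$ rather than the naive $\phi^2=I$; once this is set up, the algebraic solution is direct. A secondary consideration is identifying the realizable boundary actions of orientation-preserving self-diffeomorphisms of $X$; the homological constraint above confines them to lower-triangular matrices with equal $\pm 1$ diagonal, and standard extension arguments confine the $\epsilon=-1$ case further to the reversibility involution, so the argument goes through uniformly.
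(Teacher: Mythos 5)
Your overall strategy coincides with the paper's: rewrite the gluing in a single meridian--longitude basis so the effective gluing matrix $\tilde\phi$ lies in $\SL_2(\Z)$, translate the existence of the swap into the conjugation-type identity $h_0|_T=\tilde\phi\, h_1|_T\,\tilde\phi$ between the boundary restrictions of the two pieces of $g$, constrain those restrictions homologically to matrices $\begin{pmatrix}\epsilon_i & 0\\ k_i & \epsilon_i\end{pmatrix}$, and solve. The converse direction is fine, and the algebra in the forward direction is correct --- including the nice observation that the diagonal entries of the identity force $\epsilon_0=-\epsilon_1$ even before the $k_i$ are controlled (this uses only that the homology-sphere condition makes the $(1,2)$-entry of $\tilde\phi$ nonzero).

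The genuine gap is the step ``such an $h_i$ must arise from a self-diffeomorphism that reverses the meridian class and hence extends across $\nu(K)$ \dots\ (in particular $k_i=0$).'' This is circular: a self-diffeomorphism of the exterior extends over $\nu(K)$ precisely when its boundary restriction carries the meridian \emph{curve} to $\pm$ itself, i.e.\ precisely when $k_i=0$; knowing only that the induced map on $H_1(X)\cong\Z$ is $-1$ gives $\mu\mapsto-\mu+k_i\lambda$ on the torus and nothing more. Ruling out $k_i\neq 0$ is the one nontrivial topological input: one must show that no self-diffeomorphism of a nontrivial knot exterior restricts on the boundary to (the elliptic involution composed with) a nonzero power of the Dehn twist along the Seifert longitude. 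The paper gets this from McCullough's theorem on boundary Dehn twists, which is realized only if the twisting curve bounds a disk in the exterior --- this is exactly where the hypothesis that $K$ is not an unknot enters, and your proposal never actually uses that hypothesis. Without this input you can conclude neither reversibility (your $h_i$ with $\epsilon_i=-1$ need not act as the elliptic involution on the boundary) nor the constraint that yields $\phi=\phi_n^{\pm}$. A small additional slip: with $k_i=0$ the off-diagonal relation gives $a=-d$ for $\tilde\phi$, i.e.\ $\tr\tilde\phi=0$ and $\tilde\phi^2=-I$; the equality of the two diagonal entries of $\phi$ itself only appears after undoing the basis change.
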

\begin{proof} It will be evident from the proof that if $(Y,K)$ and $\phi$ are as in the statement, then $\Sp_\phi(K,mK)$ will have a symmetry $g$ as above. Hence, we assume that $\Sp_{\phi}(K,mK)$ admits an orientation-reversing diffeomorphism $g$, as in the statement, and we will show that $K$ and $\phi$ have the stated properties.

 For our proof, it is somewhat easier write the gluing map in terms of a different basis. Note that if $(\mu,\lambda)$ is our oriented basis for $\d Y\setminus \nu(K)$, then $\phi$ is written in terms of the basis $(\mu,-\lambda)$ for $-Y\setminus \nu(K)$. For our purposes, it is more helpful to write $\phi$ in terms of the basis $(\mu,\lambda)$ for both $Y\setminus \nu(K)$ and $-Y\setminus \nu(K)$, using the same longitude and meridian for both.  Let us write $\psi$ for the map $\phi$ in this basis. Note that $\det(\psi)=1$.

 Additionally, to simplify the notation, we will view $\Sp_{\phi}(K,mK)$ as the union of two copies of $Y\setminus \nu(K)$, which we denote by $X_0$ and $X_1$. We write
\[
\Sp_{\phi}(K,mK)=\frac{ X_0\sqcup X_1}{\sim}
\]
where $x\in \d X_0$ is identified with $\psi(x)\in \d X_1$. By assumption $g$ is
induced by some diffeomorphisms $g_{10}\colon X_0\to X_1$ and $g_{01}\colon X_1\to X_0$ as below:
\[
\begin{tikzcd}
X_0 \ar[r, "g_{10}", bend left] & X_1\ar[l, "g_{01}", bend left].
\end{tikzcd}
\]
The diffeomorphism $g_{10}\sqcup g_{01}$ descends to the quotient if and only if
\begin{equation}
g_{10}(x)=(\psi\circ  g_{01}\circ  \psi)(x)
\label{eq:compatibility-phi-d}
\end{equation}
for all $x\in \d X_0$.

We now claim that 
\begin{equation}
g_{01}|_{\d X_1},g_{10}|_{\d X_0}\in \{\id,-\id\}. \label{eq:d01-pm-id}
\end{equation}
 To see this, note that both must map $\lambda$ to $\pm \lambda$, because they must preserve the kernel of the map $H_1(\d X_i)\to H_1(X_i)$. Less obviously, they must also send $\mu$ to $\pm \mu$. Homology considerations imply that $g_{01}$ and $g_{10}$ map $\mu$ to $\pm \mu+j \lambda$ for some $j\in \Z$. This would imply that, up to composition with the elliptic involution, $g_{01}|_{\d X_0}$ is an $j$-fold composition of a Dehn twist parallel to the Seifert longitude, and similarly for $g_{10}$. It follows from  \cite{McCullough_Boundary_DehnTwist}*{Theorem~1}  that this can only happen if $\lambda$ bounds a disk in $Y\setminus \nu(K)$, i.e. $K$ is an unknot, which we exclude by hypothesis.

Since $g_{01}|_{\d X_{1}},g_{10}|_{\d X_{0}}\in \{\id,-\id\}$, these maps are central in $\GL_2(\Z)$, and hence Equation~\eqref{eq:compatibility-phi-d} implies that $\psi^2=\pm \id$. 

We now consider the map $\psi$ in more detail. The Mayer-Vietoris exact sequence reads
\[
H_1(\bT^2)\to H_1(Y\setminus \nu(K))\oplus H_1(Y\setminus \nu(K))\to H_1(Z)\to 0.
\]
In particular, we see that for $Y$ to be a homology sphere, we need $\psi(\lambda)=\pm \mu+n \lambda$, for some $n\in \Z$. That is, we can write $\psi$ as a matrix as
\[
\psi=\begin{pmatrix} n_1 &\pm 1\\
*& n_2
\end{pmatrix}
\]
The condition that $\det \psi=1$ imposes the restriction that
\begin{equation}
\psi=\begin{pmatrix} n_1 &\pm 1\\
\mp (1-n_1n_2)& n_2
\end{pmatrix}.
\label{eq:matrix-phi}
\end{equation}
It is straightforward to see that there are no such matrices of the above form which square to the identity matrix. On the other hand, the matrix in Equation~\eqref{eq:matrix-phi} squares to $-\id$ if and only if $n_1=-n_2$. Setting $n=n_1$ and then changing to the basis gives the expression for $\phi$ in the statement.

Next, we observe that Equation~\eqref{eq:compatibility-phi-d} now implies that $g_{10}|_{\d X_0}=-g_{01}|_{\d X_1}$. Therefore one map is the identity, while the other is the elliptic involution. Therefore $Y\setminus \nu(K)$ admits an orientation preserving diffeomorphism which restricts to the elliptic involution on the boundary. Equivalently, there is a diffeomorphism of pairs $(Y,K)\iso (Y,-K)$. The proof is therefore complete.
\end{proof}

\begin{lem}
\label{lem:change-sign} Suppose that $(Y,K)$ is reversible. Then $\Sp_{\phi_n^{+}}(K,mK)\iso \Sp_{\phi_{-n}^-}(K,mK)$.
\end{lem}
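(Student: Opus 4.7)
The plan is to realize the difference between $\phi_n^+$ and $\phi_{-n}^-$ as precisely the elliptic involution on the boundary, and then absorb this discrepancy using the reversibility hypothesis.

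As in the proof of Proposition~\ref{prop:type-1-topology}, I first replace each $\phi$ by the corresponding matrix $\psi$ obtained by rewriting the gluing in the common basis $(\mu,\lambda)$ for the boundary tori of both copies. A direct calculation gives
\[
\psi_n^+=\begin{pmatrix} n & 1 \\ -(1+n^2) & -n \end{pmatrix}, \qquad \psi_{-n}^-=\begin{pmatrix} -n & -1 \\ 1+n^2 & n \end{pmatrix},
\]
so $\psi_{-n}^- = -\psi_n^+$; that is, the two gluing maps differ precisely by the elliptic involution $-\id \in \GL_2(\Z)$.

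Next, the reversibility of $(Y,K)$ supplies an orientation-preserving self-diffeomorphism $h\colon Y\setminus \nu(K)\to Y\setminus \nu(K)$ whose restriction to the boundary torus is the elliptic involution $-\id$. Writing the two splices as $X_0\cup_{\psi_n^+} X_1$ and $X_0'\cup_{\psi_{-n}^-} X_1'$ with $X_0 = X_1 = X_0' = X_1' = Y\setminus \nu(K)$, I define the candidate diffeomorphism on the disjoint union by $f_0 = h$ on $X_0$ and $f_1 = \id$ on $X_1$. The verification that $f_0 \sqcup f_1$ descends to the quotient is the direct analogue of Equation~\eqref{eq:compatibility-phi-d}: for $x\in \d X_0$,
\[
\psi_{-n}^-\bigl(f_0(x)\bigr) = \psi_{-n}^-(-x) = -\psi_{-n}^-(x) = \psi_n^+(x) = f_1\bigl(\psi_n^+(x)\bigr),
\]
using $\psi_{-n}^- = -\psi_n^+$. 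Since both $h$ and $\id$ are orientation-preserving, so is the induced diffeomorphism.

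The main obstacle is simply the bookkeeping around the change of basis that converts $\phi_n^\pm$ into $\psi_n^\pm$; once the identity $\psi_{-n}^- = -\psi_n^+$ is in hand, the reversibility diffeomorphism exactly absorbs the discrepancy between the two gluings, and the argument parallels the symmetry construction in the last step of the proof of Proposition~\ref{prop:type-1-topology}.
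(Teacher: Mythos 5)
Your proof is correct and is essentially the paper's argument: the paper likewise notes that reversibility lets the elliptic involution $-\id$ extend over $Y\setminus\nu(K)$, so that $\Sp_{\phi_n^+}(K,mK)\iso \Sp_{-\phi_n^+}(K,mK)=\Sp_{\phi_{-n}^-}(K,mK)$. Your extra step of passing to the $\psi$-basis is harmless (negation commutes with the basis change), and your explicit verification of the descent condition just spells out what the paper leaves implicit.
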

\begin{proof} Since $K$ is reversible, the elliptic involution of the boundary extends to an orientation preserving diffeomorphism of $Y\setminus \nu(K)$. Therefore
\[
\Sp_{\phi_n^+}(K,mK)\iso \Sp_{-\phi_n^+}(K,mK)=\Sp_{\phi_{-n}^-}(K,mK)
\]
completing the proof.
\end{proof}

\subsection{Type-2 symmetric splices}
\label{sec:type-2}

We now consider Type-2 splices.  We say that a pair $(Y,K)$ is \emph{negative amphichiral} if $(-Y,-K)\iso (Y,K)$, and we say that $(Y,K)$ is \emph{positive amphichiral} if $(-Y,K)\iso (Y,K)$. 

\begin{prop}
\label{prop:type-2-topology}
Let $(Y_0,K_0)$ and $(Y_1,K_1)$ be knots and $\phi\in \GL^-_2(\Z)$, and furthermore suppose that $\Sp_\phi(K_0,K_1)$ is an integer homology 3-sphere. Then $\Sp_{\phi}(K_0,K_1)$ admits an orientation-reversing diffeomorphism $g$ which preserves the subspaces $Y_i\setminus \nu(K_i)$ setwise if and only if the following hold:
\begin{enumerate}
\item One of $(Y_0,K_0),$ $(Y_1,K_1)$ is negative amphichiral and the other is positive amphichiral.
\item $
\phi=\pm \begin{pmatrix}0 &1\\
1&0
\end{pmatrix}.$
\end{enumerate}
\end{prop}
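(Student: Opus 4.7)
The plan is to mirror the structure of the proof of Proposition~\ref{prop:type-1-topology}, adapted to the case where the orientation-reversing symmetry $g$ of $Z := \Sp_\phi(K_0,K_1)$ preserves each piece $X_i := Y_i\setminus \nu(K_i)$ setwise, rather than swapping them. The forward direction will be a direct construction from the amphichiral structures on $(Y_0,K_0)$ and $(Y_1,K_1)$ together with the gluing matrix, so I will focus on the reverse implication. Under the hypothesis, $g$ restricts to orientation-reversing self-diffeomorphisms $g_i\colon X_i\to X_i$ whose boundary restrictions must satisfy the compatibility
\[
\phi\circ g_0|_{\partial X_0}=g_1|_{\partial X_1}\circ \phi.
\]

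The first step is to pin down the possibilities for $g_i|_{\partial X_i}$. Since each $g_i$ is orientation-reversing on $X_i$, so too is its restriction to the boundary torus. Preservation of the kernel of $H_1(\partial X_i)\to H_1(X_i)$ (generated by $\lambda$) forces the longitude to $\pm\lambda$, and by the same appeal to \cite{McCullough_Boundary_DehnTwist}*{Theorem~1} used in the Type-1 proof (valid as long as neither $K_i$ is an unknot) the meridian must also go to $\pm \mu$. Combined with $\det = -1$, this restricts
\[
g_i|_{\partial X_i}\in\{\diag(1,-1),\,\diag(-1,1)\}.
\]

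The second step extracts $\phi$. Writing $\phi=\begin{pmatrix} a & c\\ b & d\end{pmatrix}$ and running the Mayer--Vietoris sequence $H_1(\bT^2)\to H_1(X_0)\oplus H_1(X_1)\to H_1(Z)\to 0$ with the hypothesis that $Z$ is an integer homology sphere, one sees that $c=\pm 1$ and $\det\phi=-1$ imposes $b=\pm(ad+1)$. Plugging the four possibilities for $(g_0|_{\partial X_0},g_1|_{\partial X_1})$ into the compatibility relation, direct matrix computation rules out the two cases with $g_0|_{\partial X_0}=g_1|_{\partial X_1}$ as inconsistent with $c=\pm 1$, and forces $g_0|_{\partial X_0}=-g_1|_{\partial X_1}$ in the surviving cases, in which one reads off $a=d=0$. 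Combined with $\det\phi=-1$ this pins $\phi=\pm\begin{pmatrix}0&1\\1&0\end{pmatrix}$, giving part (2).

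Finally, for part (1), I interpret the two surviving boundary maps topologically. The map $\diag(1,-1)$ fixes $\mu$ and reverses $\lambda$, so $g_i$ extends across $\nu(K_i)$ to an orientation-reversing self-diffeomorphism of $Y_i$ reversing the string orientation on $K_i$, i.e. witnesses $(Y_i,K_i)$ negative amphichiral; the map $\diag(-1,1)$ preserves the string orientation and witnesses $(Y_i,K_i)$ positive amphichiral. Since $g_0|_{\partial X_0}$ and $g_1|_{\partial X_1}$ are necessarily opposite choices of these two maps, exactly one of the pairs is negative amphichiral and the other positive amphichiral. I expect the main effort to lie in the bookkeeping of signs in the matrix computation of the second step; the only conceptual subtlety is verifying that each $g_i$ actually extends across the knot tubular neighborhood, which is automatic once we know the meridian goes to $\pm\mu$ so that the meridian disk is preserved up to sign.
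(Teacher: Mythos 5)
Your proposal is correct and follows essentially the same route as the paper: pin down $g_i|_{\partial X_i}\in\{\pm\mathrm{diag}(1,-1)\}$ via the kernel of $H_1(\partial X_i)\to H_1(X_i)$ and the McCullough argument, combine the gluing compatibility $\phi\circ g_0=g_1\circ\phi$ with the homology-sphere constraint to force $\phi=\pm\begin{pmatrix}0&1\\1&0\end{pmatrix}$ and $g_0|_{\partial X_0}=-g_1|_{\partial X_1}$, and read off the opposite amphichiralities. The only (immaterial) difference is that you apply Mayer--Vietoris before the commutation relation, whereas the paper first narrows $\phi$ to four matrices and then discards $\pm\mathrm{diag}(1,-1)$ because those splices have $b_1=1$.
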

\begin{proof} We write
\[
X_0=Y_0\setminus \nu(K_0)\quad \text{and} \quad X_1=Y_1\setminus \nu(K_1).
\]
It will follow from the course of our proof that if $\phi$ and $(Y_i,K_i)$ are as in the statement, then there is an orientation-reversing diffeomorphism $g$ as in the statement. Hence we will assume that such a $g$ exists, and prove that it has the stated form.

We assume that $g$ is induced by a pair of maps, $g_0$ and $g_1$, as below
\[
\begin{tikzcd}
X_0\ar[loop left, "g_0"] &[-.7cm] X_1 \ar[loop right, "g_1"]
\end{tikzcd}
\]
The maps $g_0$ and $g_1$ induce a map on the quotient space if and only if
\begin{equation}
\phi\circ g_0=g_1\circ \phi.\label{eq:phi-commutator}
\end{equation}
The proof of Equation~\eqref{eq:d01-pm-id} shows that for $i=0,1$, we have $g_i|_{\d X_i}\in \{e,-e\}$ where
\[
e=\begin{pmatrix} 1& 0\\
0& -1
\end{pmatrix}.
\]
Equation~\eqref{eq:phi-commutator} implies that
\[
\phi \circ e=\pm e \circ \phi.
\]
It is easy to see that this restricts $\phi$ to be one of four matrices:
\[
\phi=\pm e\qquad \text{and} \qquad \phi=\pm \begin{pmatrix}0 &1\\
1& 0
\end{pmatrix}.
\]
Note that if $\phi=\pm e$, then the splice $\Sp_{\phi}(K_0,K_1)$ has $b_1=1$, so we exclude this case and restrict to the second case. We observe that in the latter case, we have
\[
\phi \circ e=-e \circ\phi.
\]
In particular, we conclude from Equation~\eqref{eq:phi-commutator} that $g_0=-g_1$.
Note that this corresponds exactly to one of $K_0$ and $K_1$ being negative amphichiral, and the other being positive amphichiral.
\end{proof}

\subsection{Factorizations in $\SL_2(\Z)$}

In this section, we describe some straightforward algebra which will be used in the subsequent section on Kirby calculus.

We consider the elements $\psi_n^+\in \SL_2(\Z)$, given by 
\[
\psi_n^+=\begin{pmatrix} n & 1\\
- (1+n^2)& -n
\end{pmatrix}
\]
We define the following elements of $\SL_2(\Z)$:
\[
T_n:=
\begin{pmatrix}
1 &n\\
0& 1
\end{pmatrix}\quad \text{and} \quad H=\begin{pmatrix} 0&  1\\
- 1& 0
\end{pmatrix}.
\]

\begin{lem}
\label{lem:factorization-phi-n} The map $\psi_n^{+}$ may be written as
\[
\psi_n^{+}=H T_{-n}  H  T_n H.
\]
\end{lem}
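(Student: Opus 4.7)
The plan is to prove the identity by direct matrix multiplication. There are no subtleties beyond bookkeeping, but I would organize the calculation around one simplifying preliminary observation so that the bookkeeping stays minimal.

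The observation is that conjugation by $H$ interchanges upper- and lower-triangular elementary matrices. Concretely, since $H^2=-I$, a single $2\times 2$ multiplication gives
\[
H T_k H = \begin{pmatrix} -1 & 0 \\ k & -1 \end{pmatrix}
\]
for every $k \in \Z$. This is the only auxiliary identity required, and it already eliminates two of the five factors on the right-hand side of the claimed factorization.

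Applying this with $k=-n$, I would rewrite
\[
H T_{-n} H T_n H \;=\; \bigl(H T_{-n} H\bigr)\cdot T_n \cdot H \;=\; \begin{pmatrix} -1 & 0 \\ -n & -1 \end{pmatrix} T_n H,
\]
and then multiply the remaining three factors in order (a lower-triangular matrix, an upper-triangular matrix, and $H$). Reading off the entries of the resulting $2\times 2$ matrix and comparing them termwise to those of $\psi_n^{+}$ completes the verification.

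The only thing to watch for is sign bookkeeping, since $H$ is responsible for introducing essentially all of the minus signs appearing in $\psi_n^{+}$ and it is easy to drop one accidentally. As a sanity check before committing to the general calculation I would spot-check the identity at $n=0$ (where everything collapses to a relation among powers of $H$) and at $n=1$ (where the off-diagonal entries $\pm(1+n^2)$ are nontrivial but small), and only then write out the generic $n$ case.
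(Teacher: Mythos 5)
Your approach is exactly the one the paper intends: the paper gives no proof beyond ``straightforward computation,'' and your organizing identity $H T_k H = \left(\begin{smallmatrix} -1 & 0 \\ k & -1 \end{smallmatrix}\right)$ is correct and does reduce the work to a three-factor product. So the method is fine.

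However, if you actually carry out the remaining multiplication with the paper's stated conventions, the termwise comparison you promise will not close. One finds
\[
H T_{-n} H \, T_n H \;=\; \begin{pmatrix} -1 & 0 \\ -n & -1 \end{pmatrix}\begin{pmatrix} -n & 1 \\ -1 & 0 \end{pmatrix} \;=\; \begin{pmatrix} n & -1 \\ 1+n^2 & -n \end{pmatrix},
\]
which differs from $\psi_n^{+}=\left(\begin{smallmatrix} n & 1 \\ -(1+n^2) & -n \end{smallmatrix}\right)$ by the signs of both off-diagonal entries; equivalently, the product is $e\,\psi_n^{+}\,e^{-1}$ with $e=\mathrm{diag}(1,-1)$, i.e.\ it is $\psi_n^{-}$ rather than $\psi_n^{+}$. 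Your own sanity check at $n=0$ already detects this: $HT_0HT_0H=H^3=-H$, whereas $\psi_0^{+}=H$. This is a sign/basis convention issue in the statement rather than a defect in your method (and it is harmless for the paper's application, since $K$ is assumed reversible and the Kirby picture is insensitive to these signs), but as written your verification would end with a mismatch, so you should either record the identity in the conjugated form above or note explicitly which convention for $H$ or for composition order makes the equality literal.
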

The proof is a straightforward computation, which we leave to the reader.

%Note also that $\phi_n^-=-\phi_{-n}^+$. Since we are focused on knots $K$ which are isotopic to their string reversal, we will merely focus on $\phi_n^+$ for $n\in \Z$. 

\subsection{Kirby calculus}

We can now translate Lemma~\ref{lem:factorization-phi-n} into Kirby calculus. Our main result is the following:

\begin{prop}
Let $K\subset Y$ be a knot. The manifold $\Sp_{\phi_n^+}(K,mK)$ has a Kirby diagram as shown in Figure~\ref{fig:Kirby-Y-n}. 
\end{prop}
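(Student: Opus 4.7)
The plan is to translate the algebraic factorization of Lemma~\ref{lem:factorization-phi-n} into an explicit sequence of handle attachments, producing the Kirby diagram claimed in the statement.

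\textbf{Setup.} By Proposition~\ref{prop:type-1-topology} and the change of basis used in its proof, the splice $\Sp_{\phi_n^+}(K,mK)$ is diffeomorphic to the union of two copies of $Y\setminus \nu(K)$ glued by $\psi_n^+\in \SL_2(\Z)$ in a common meridian--longitude basis. By Lemma~\ref{lem:factorization-phi-n}, $\psi_n^+ = H T_{-n} H T_n H$.

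\textbf{Strategy.} The general principle I would invoke is that a torus gluing by a product $A_1\cdots A_k$ can be realized topologically by inserting $k-1$ copies of $T^2\times I$ between the two pieces, each carrying one of the elementary identifications. On the level of Kirby calculus, the two atomic pieces are standard: a factor of $T_k$ is the effect on the boundary of a $(-1/k)$-framed meridional 2-handle on the interpolating torus (which, after a Rolfsen twist, becomes a $k$-framed unknot encircling the core), and a factor of $H$ is the effect on the boundary of a 0-framed meridional 2-handle, which implements the meridian--longitude swap.

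\textbf{Assembly.} Starting from a Kirby diagram containing $K\subset Y$ and $-K\subset -Y$ with their Seifert framings, I would insert between them the five surgery components dictated by the factorization $H\,T_{-n}\,H\,T_n\,H$: three 0-framed meridional unknots interleaved with one $(-n)$-framed and one $n$-framed unknot, forming a chain whose two ends link $K$ and $-K$, respectively. This explicitly realizes $\Sp_{\phi_n^+}(K,mK)$.

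\textbf{Simplification.} Finally, I would clean up via handle slides and Rolfsen twists, absorbing several of the 0-framed components (which function as meridian--longitude swaps) into the adjacent geometry so that the resulting diagram coincides with the one displayed in Figure~\ref{fig:Kirby-Y-n}.

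The principal obstacle is the careful bookkeeping of framings and orientations. Since one of the two knot complements lives inside $-Y$, a surgery presentation for that side must be obtained from one for $Y$ by reversing the signs of all framings; likewise, the change of basis between $\phi_n^+\in \GL_2^-(\Z)$ and $\psi_n^+\in \SL_2(\Z)$ introduces a sign that must be threaded through the framings of the interpolating unknots. A sanity check at $n=0$, where $\psi_0^+$ is (up to sign) the classical splice matrix $H$, would verify that the conventions are correctly aligned.
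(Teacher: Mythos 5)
Your overall strategy is the same as the paper's: invoke Lemma~\ref{lem:factorization-phi-n}, pass to the basis in which the gluing is $\psi_n^+$, and realize each factor of $H T_{-n} H T_n H$ geometrically by inserting copies of the Hopf link complement $\bT^2\times[0,1]$. However, your realization of the $T_k$ factors is where the argument breaks. A factor $T_k$ is indeed realized by $\mp 1/k$ surgery on a parallel copy of the most recently added chain component, but your parenthetical claim that this ``after a Rolfsen twist, becomes a $k$-framed unknot encircling the core'' is false: the Rolfsen twist on a $(-1/k)$-framed unknotted component \emph{deletes} that component and adds $k$ to the framing of the single strand passing through its spanning disk. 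It does not leave behind an integer $k$-framed meridian, and replacing a $(-1/k)$-framed meridian by a $k$-framed one changes the $3$-manifold. The correct conclusion of the Rolfsen twist is precisely the paper's recipe: each $T_k$ contributes no new component at all, but instead shifts the framing of the most recently added Hopf-link component by $k$. That recipe outputs the four-component chain $K(0)$--$U_1(n)$--$U_2(-n)$--$mK(0)$ of Figure~\ref{fig:Kirby-Y-n} directly, with no simplification step.

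Consequently your seven-component chain with interpolating framings $(0,n,0,-n,0)$ is not the diagram in the figure, and the final ``absorb several of the $0$-framed components by handle slides and Rolfsen twists'' step is exactly where the missing content lies: a $0$-framed unknot in the interior of a chain is not a meridian of a single component, so it cannot be slam-dunked or Rolfsen-twisted away without introducing new linking between its two neighbors, and the positions of the $n$ and $-n$ framings in your chain differ from those in the figure (they correspond to different continued-fraction expansions of the gluing matrix). Either carry out that simplification explicitly, or---better---apply the Rolfsen twist correctly at the $T_k$ stage so that no simplification is needed. One further point: you flag the sign of the clasps as a bookkeeping obstacle but do not resolve it; the paper disposes of it by observing that reversibility of $(Y,K)$ (guaranteed by Proposition~\ref{prop:type-1-topology}) makes the choice of clasp sign immaterial.
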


We begin with the following elementary topological lemma.
\begin{lem} Let $(Y_0,K_0)$ and $(Y_1,K_1)$ be knots with Morse framings $\lambda_0$ and $\lambda_1$ respectively. Let $\phi$ be the gluing map which identifies the meridian $\mu_0$ with $\mu_1$, and which maps $\lambda_0$ to $-\lambda_1$. (Here, $-\lambda_2$ denotes the Morse framing $\lambda_2$ with the parametrization reversed). Then 
\[
(Y_0\setminus \nu(K_0) )\cup_\phi (Y_1\setminus \nu(K_1))
\]
is equal to $(Y_0\# Y_1)_{\lambda_0+\lambda_1}(K_0\# K_1)$. 
\end{lem}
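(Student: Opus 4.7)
My plan is to split the surgered manifold $M := (Y_0 \# Y_1)_{\lambda_0 + \lambda_1}(K_0 \# K_1)$ along a cleverly chosen embedded torus, identify the two resulting pieces with the individual knot exteriors $Y_i \setminus \nu(K_i)$, and verify that the gluing across the torus is exactly the splicing map $\phi$ of the statement. First I would fix the connect-sum presentation $Y_0 \# Y_1 = (Y_0 \setminus \Int B_0) \cup_S (Y_1 \setminus \Int B_1)$, where each $B_i$ is a $3$-ball meeting $K_i$ in an unknotted boundary-parallel arc, so that $K_0 \# K_1$ meets the decomposing sphere $S$ transversely in two points. Set $E := (Y_0 \# Y_1) \setminus \nu(K_0 \# K_1)$ and $A := S \cap E$; then $A$ is an annulus whose two boundary circles are parallel copies of the common meridian $\mu_T$. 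Attaching the surgery solid torus $V \cong S^1 \times D^2$ to $\partial E$ so that $\partial D^2_V$ represents $\lambda_0 + \lambda_1$, the curve $\mu_T$ becomes a longitude $S^1_V$ of $V$; the two boundary circles of $A$ are therefore parallel longitudes of $V$ and cobound an embedded annulus $A_V = S^1_V \times \gamma \subset V$, where $\gamma$ is a chord of $D^2_V$. The union $T_s := A \cup A_V$ is an embedded torus in $M$ that splits it into two pieces $P_0, P_1$, with $P_i = E_i \cup V_i$, where $E_i$ is the $Y_i$-side of $E$ and $V_i$ is one of the two solid tori obtained by cutting $V$ along $A_V$.

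The heart of the argument is to show $P_i \cong Y_i \setminus \nu(K_i)$. I would compare with the natural decomposition $Y_i \setminus \nu(K_i) = E_i \cup (B_i \setminus \nu(K_i \cap B_i))$, where the solid torus $B_i \setminus \nu(K_i \cap B_i)$ attaches to $E_i$ along the annulus $A \subset \partial E_i$. In $P_i$, the solid torus $V_i$ attaches to $E_i$ instead along the complementary annulus $L_i := \partial E_i \setminus A$. Now both $L_i$ and $A$ are annular neighborhoods of curves parallel to $\mu_i$ on the torus $\partial E_i = L_i \cup A$, hence they are isotopic through the interior of $E_i$; moreover the matching annulus on each solid torus is longitudinal (with core representing $\mu_i$). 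A standard argument for solid-torus attachments along isotopic longitudinal annuli then yields $P_i \cong Y_i \setminus \nu(K_i)$.

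Finally, I would track the curves $\mu_i$ and $\lambda_i$ through the identifications $\partial P_i \cong \partial(Y_i \setminus \nu(K_i))$ to recover the splice gluing map across $T_s$. The meridians agree automatically because both $\mu_0$ and $\mu_1$ correspond to the common longitude $S^1_V$ of $V$. For the framings, any closed curve on $T_s$ restricting to $\lambda_0$ on the $Y_0$-side must continue across $A_V$ as $-\lambda_1$ on the $Y_1$-side so that the composite has total slope $\lambda_0 + \lambda_1 = \partial D^2_V$ (which bounds in $V$); this is precisely $\phi(\mu_0) = \mu_1$, $\phi(\lambda_0) = -\lambda_1$. The main obstacle is the isotopy argument in the preceding paragraph: although attaching a solid torus along a longitudinal annulus is topologically mild, one must verify carefully that the two attachments, along parallel but distinct annuli, produce the same $3$-manifold up to homeomorphism.
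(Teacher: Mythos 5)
The paper does not actually prove this lemma --- it defers to Gordon (Lemma~7.1), Fukuhara--Maruyama (Lemma~6.1), and Saveliev (Section~1.1.7) --- and your argument is essentially the standard proof found in those sources: split the surgered manifold along the torus $T_s=A\cup A_V$ and identify the two sides with the knot exteriors. The decomposition is set up correctly, and your identification $P_i\cong Y_i\setminus\nu(K_i)$ is sound. Two remarks on where the care actually belongs. First, the step you flag as the main obstacle is in fact the easy one: attaching $S^1\times D^2$ along an annulus $S^1\times\alpha$ with $\alpha\subset\partial D^2$ an arc is the same as attaching a collar, because $(D^2,\alpha)\cong(\alpha\times[0,1],\alpha\times\{0\})$; so both $P_i$ and $Y_i\setminus\nu(K_i)$ are obtained from $E_i$ by collar attachments and there is no issue about ``parallel but distinct annuli.''

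Second, what genuinely needs care is the last paragraph. An abstract homeomorphism $P_i\cong Y_i\setminus\nu(K_i)$ is useless there: you must use the specific collar-collapse homeomorphisms $h_i\colon P_i\to Y_i\setminus\nu(K_i)$ so that their effect on boundary curves can be tracked, and your sentence about a curve ``restricting to $\lambda_0$ \dots\ continuing as $-\lambda_1$ \dots\ with total slope $\lambda_0+\lambda_1$'' does not literally parse, since the curve in question lives on $T_s$ while the slope $\lambda_0+\lambda_1$ lives on the different torus $\partial\nu(K_0\# K_1)$. The precise version of your idea is: the framing curve $\Lambda=\lambda_0+\lambda_1=\partial D^2_V$ meets $\partial A$ in two points and splits as arcs $\ell_i\subset L_i$; under the collar collapses one finds $h_0^{-1}(\lambda_0)=\tilde\ell_0\cup a_0$ and $h_1^{-1}(\lambda_1)=\tilde\ell_1\cup a_1$, where $\tilde\ell_i\subset A_V$ is the product arc lying over $\ell_i$ and $a_i\subset A$ is the straight product arc joining the same two endpoints (this is exactly where the additivity of framings under connected sum enters). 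Since $\ell_0$ and $\ell_1$ traverse the same pair of endpoints of $\Lambda\cap\partial A$ in opposite directions, $\tilde\ell_1=-\tilde\ell_0$ and $a_1=-a_0$, so $h_1^{-1}(\lambda_1)=-h_0^{-1}(\lambda_0)$, i.e.\ $\phi(\lambda_0)=-\lambda_1$; the meridian claim is immediate because the core of $A$ is simultaneously $\mu_0$ and $\mu_1$ with orientations inherited from $K_0\# K_1$. With that rewriting your proof is complete and matches the cited arguments.
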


See \cite{GordonSatellite}*{Lemma~7.1} or \cite{Fukuhara-Maruyama_additive}*{Lemma~6.1} for a proof. See also \cite{Saveliev_Homology_3-spheres}*{Section~1.1.7}.

The above lemma extends in a straightforward manner to link complements when we take the connected sum along a single component. We note that the Hopf link has complement $\bT^2\times [0,1]$. The meridian of the first component of the Hopf link is equal to the longitude of the second up to sign, and vice versa. Therefore from a factorization of the gluing diffeomorphism $\psi_n^+$, as in Lemma~\ref{lem:factorization-phi-n}, we may read off a Kirby calculus description of $\Sp_{\phi_n^+}(K,mK)$. Namely, we start with $K$, which we give framing $0$. Reading the factorization 
\[
\psi_n^+=H T_{-n} H T_n H
\]
from right to left we form a Kirby calculus presentation inductively as follows:
\begin{enumerate}
\item Start with $K$, given framing 0.
\item For each $H$, we take the connected sum with a $(0,0)$-framed Hopf link. \item For each $T_n$, we add $n$ to the framing of the most-recently added component in this process.
\item We finish by taking the connected sum of the final unknot, which has framing 0, with $mK$.
\end{enumerate} 

Note that since we are assuming that $K$ is reversible, we do not need to worry about the sign of the clasps that we add when taking the connected sum with Hopf links.

\begin{figure}[h]
%% Creator: Inkscape 1.2.2 (732a01da63, 2022-12-09), www.inkscape.org
%% PDF/EPS/PS + LaTeX output extension by Johan Engelen, 2010
%% Accompanies image file '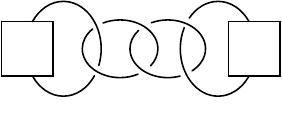' (pdf, eps, ps)
%%
%% To include the image in your LaTeX document, write
%%   \input{<filename>.pdf_tex}
%%  instead of
%%   \includegraphics{<filename>.pdf}
%% To scale the image, write
%%   \def\svgwidth{<desired width>}
%%   \input{<filename>.pdf_tex}
%%  instead of
%%   \includegraphics[width=<desired width>]{<filename>.pdf}
%%
%% Images with a different path to the parent latex file can
%% be accessed with the `import' package (which may need to be
%% installed) using
%%   \usepackage{import}
%% in the preamble, and then including the image with
%%   \import{<path to file>}{<filename>.pdf_tex}
%% Alternatively, one can specify
%%   \graphicspath{{<path to file>/}}
%% 
%% For more information, please see info/svg-inkscape on CTAN:
%%   http://tug.ctan.org/tex-archive/info/svg-inkscape
%%
\begingroup%
  \makeatletter%
  \providecommand\color[2][]{%
    \errmessage{(Inkscape) Color is used for the text in Inkscape, but the package 'color.sty' is not loaded}%
    \renewcommand\color[2][]{}%
  }%
  \providecommand\transparent[1]{%
    \errmessage{(Inkscape) Transparency is used (non-zero) for the text in Inkscape, but the package 'transparent.sty' is not loaded}%
    \renewcommand\transparent[1]{}%
  }%
  \providecommand\rotatebox[2]{#2}%
  \newcommand*\fsize{\dimexpr\f@size pt\relax}%
  \newcommand*\lineheight[1]{\fontsize{\fsize}{#1\fsize}\selectfont}%
  \ifx\svgwidth\undefined%
    \setlength{\unitlength}{135.10420153bp}%
    \ifx\svgscale\undefined%
      \relax%
    \else%
      \setlength{\unitlength}{\unitlength * \real{\svgscale}}%
    \fi%
  \else%
    \setlength{\unitlength}{\svgwidth}%
  \fi%
  \global\let\svgwidth\undefined%
  \global\let\svgscale\undefined%
  \makeatother%
  \begin{picture}(1,0.41806873)%
    \lineheight{1}%
    \setlength\tabcolsep{0pt}%
    \put(0,0){\includegraphics[width=\unitlength,page=1]{kirby_fig1.pdf}}%
    \put(0.20915357,0.01023197){\makebox(0,0)[lt]{\lineheight{1.25}\smash{\begin{tabular}[t]{l}$0$\end{tabular}}}}%
    \put(0.74483281,0.01023197){\makebox(0,0)[lt]{\lineheight{1.25}\smash{\begin{tabular}[t]{l}$0$\end{tabular}}}}%
    \put(0.41600692,0.09101465){\makebox(0,0)[t]{\lineheight{1.25}\smash{\begin{tabular}[t]{c}$n$\end{tabular}}}}%
    \put(0.58428877,0.09101465){\makebox(0,0)[t]{\lineheight{1.25}\smash{\begin{tabular}[t]{c}$-n$\end{tabular}}}}%
    \put(0.09426729,0.21855462){\makebox(0,0)[t]{\lineheight{1.25}\smash{\begin{tabular}[t]{c}$K$\end{tabular}}}}%
    \put(0.90074647,0.21855462){\makebox(0,0)[t]{\lineheight{1.25}\smash{\begin{tabular}[t]{c}$mK$\end{tabular}}}}%
  \end{picture}%
\endgroup%

\caption{The manifold $\Sp_{\phi_n^+}(K,mK)$. We view the box labeled by $K$ as being inside $Y$.}
\label{fig:Kirby-Y-n}
\end{figure}

%
%\begin{rem} If we set $n=0$, then we can blow down one of the clasps, and we are get $K$ and $m K$, both given the 0 framing and a single clasp between them. The sign of the clasp distinguishes $\phi_0^+$ from $\phi_0^{-}$. 
%\end{rem}

We now describe some Kirby calculus moves in our description of $\Sp_{\phi_n^+}(K,mK)$ which will be helpful later on.

\begin{lem}
\label{lem:reduce-n} Let $K$ be a reversible knot in an integer homology sphere $Y$, and $n\in \Z$. Then there are reversible knots $K'\subset Y'$ and $K''\subset Y''$, where $Y'$ and $Y''$ are also integer homology 3-spheres, so that
\[
\Sp_{\phi_n^+}(K,mK)=\Sp_{\phi_{n+1}^+}(K',mK')=\Sp_{\phi_{n-1}^+}(K'',mK'').
\]
\end{lem}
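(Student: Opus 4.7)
The plan is to manipulate the Kirby diagram of Figure~\ref{fig:Kirby-Y-n} representing $\Sp_{\phi_n^+}(K,mK)$. I will describe the argument producing $K' \subset Y'$ with $\Sp_{\phi_n^+}(K,mK) = \Sp_{\phi_{n+1}^+}(K',mK')$; the construction of $K'' \subset Y''$ is analogous, either by running the mirror move, or by applying the first identity to $n-1$ in place of $n$, or via Lemma~\ref{lem:change-sign}.

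First I would perform a local Kirby move at the Hopf clasp between the $K$-box and the adjacent unknot $U_1$ (framing $n$). Specifically, blow up a $-1$-framed meridional unknot that encircles the two strands of the clasp (one strand coming from $K$, one from $U_1$) and then blow it down: the net effect is a $+1$ twist of the 2-strand braid at the clasp, which shifts the framings of $K$ and $U_1$ by $+1$ each, alters the linking number between them by $+1$, and inserts a full positive twist into the clasp region. A mirror-symmetric move at the clasp between $U_2(-n)$ and the $mK$-box (using a $+1$-framed meridian) will shift the framings of $U_2$ and $mK$ by $-1$ and insert a full negative twist there. In both cases these are standard Kirby moves and they preserve the underlying 3-manifold.

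Next I would reinterpret the resulting diagram: the modified $K$-side (now carrying an extra $+1$ framing and a twisted clasp to $U_1$) becomes the boxed representation of a new pair $(Y',K')$, where $Y'$ is obtained from $Y$ by $\pm 1$-surgery on an auxiliary knot traced out by the meridional blow-up and $K'$ is the image of $K$ under this surgery. Doing the analogous reinterpretation on the $mK$-side and reading off framings yields a diagram of the form $K'(0)-U_1(n+1)-U_2(-n-1)-mK'(0)$, which by the construction leading to Figure~\ref{fig:Kirby-Y-n} is exactly the Kirby diagram for $\Sp_{\phi_{n+1}^+}(K',mK')$. Because the auxiliary surgery slopes in the blow-ups are $\pm 1$, the new ambient manifold $Y'$ is again an integer homology 3-sphere.

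Reversibility of $K'$ is inherited from that of $K$. The reversibility diffeomorphism $(Y,K)\iso (Y,-K)$ restricts on $\partial \nu(K)$ to the elliptic involution, which preserves setwise both the meridian around which the local blow-up takes place and the surgery slope defining $Y'$; hence it extends across the local Kirby move to a diffeomorphism $(Y',K')\iso (Y',-K')$. The same remark applied on the $mK$-side identifies the new right-hand boxed knot as the mirror $mK'$, as required for a Type-1 symmetric splice.

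The hard part will be verifying in detail that this reinterpretation correctly places the new diagram into the form of Figure~\ref{fig:Kirby-Y-n} with $n$ replaced by $n+1$; this amounts to tracking how the twisted clasps produced by the local move are absorbed into the new boxed knots, and checking that no extra components survive outside the chain. Once this bookkeeping is carried out, the equality of 3-manifolds is immediate from the preservation of the 3-manifold under blow-up/blow-down, and the lemma follows.
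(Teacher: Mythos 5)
You start in the right place (the chain diagram of Figure~\ref{fig:Kirby-Y-n} and a local blow-up at each clasp), but the argument breaks down at precisely the step you set aside as ``the hard part,'' and that step is the entire content of the lemma. The first problem is with blowing the auxiliary curve back down: the full twist it leaves behind changes the linking number between $K$ and the adjacent unknot $U_1$ from $\pm 1$ to an even number ($0$ or $\pm 2$), so the resulting diagram is no longer a chain of Hopf-clasped components and hence is not of the form of Figure~\ref{fig:Kirby-Y-n} for \emph{any} choice of boxed pair; an extra unit of linking with $U_1$ cannot be absorbed into a box, since the box only records what happens away from $U_1$. The second, related problem is that your identification of the new pair is not the one that makes the diagram work. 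You propose $Y'=Y_{\pm1}(E)$ for an auxiliary curve $E$ with $K'$ the image of $K$; but $E$ encircles strands of both $K$ and $U_1$, so ``doing surgery on $E$'' is just the Rolfsen twist that returns you to the blown-down picture above, and the image of $K$ still has the wrong linking with $U_1$ and blackboard framing $+1$ rather than $0$.

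The identification that works is different: keep the blown-up $\pm1$-framed curve in the diagram, absorb $K$ itself (which now carries framing $+1$) into the surgery description of the new ambient manifold $Y'=Y_{+1}(K)$, and take $K'$ to be the blown-up curve, i.e.\ the \emph{dual knot} of $K$ in $Y_{+1}(K)$ (isotopic to a meridian of $K$). That component clasps $U_1$ exactly once, its blackboard framing $+1$ is the Seifert framing when viewed in $Y_{+1}(K)$, and the diagram is then literally Figure~\ref{fig:Kirby-Y-n} with $n$ replaced by $n+1$; this is what Figure~\ref{fig:Kirby-Y-n-blown-up} records. Your reversibility argument is in the right spirit but is applied to the wrong knot: what one actually needs is that the elliptic involution of $\partial\nu(K)$ extends over $Y\setminus\nu(K)$ (reversibility of $K$), preserves the $+1$ slope, and therefore induces a diffeomorphism of $Y_{+1}(K)$ carrying the dual knot to itself with reversed orientation.
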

\begin{proof} The proof is to take the Kirby calculus description in Figure~\ref{fig:Kirby-Y-n}, and blow up the clasps between $K$ and the unknotted component clasped with it and between $mK$ and the unknotted component clasped with it. In this manner, $n$ can be increased or decreased. Then $Y'=Y_{+1}(K)$ and $K'$ is the dual knot of $K$. We give $K'$ blackboard framing $1$ in Figure~\ref{fig:Kirby-Y-n-blown-up}, but note that this corresponds to the Seifert framing for $K'$ if we view it as living in $Y_{+1}(K)$. Hence, we obtain a description of the same form as Figure~\ref{fig:Kirby-Y-n}, except with $n$ replaced by $n+1$.
\end{proof}

\begin{figure}[h]
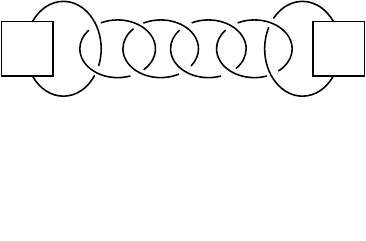
\caption{Top: An alternate surgery description of $\Sp_{\phi_n^+}(K,mK)$, obtained by blowing up two clasps in Figure~\ref{fig:Kirby-Y-n}. This identifies $\Sp_{\phi_n^+}(K,mK)$ with $\Sp_{\phi_{n+1}^+}(K',mK')$ where $K'\subset Y'=Y_{+1}(K)$ is the dual knot of $K$. Bottom: the knot $K'\subset Y_{+1}(K)$. }
\label{fig:Kirby-Y-n-blown-up}
\end{figure}

\section{Heeegaard Floer invariants of concordance and homology cobordism } \label{sec:hfi}

In this section, we review some background on Heegaard Floer invariants of homology cobordism and knot concordance. We focus on Hendricks and Manolescu's \emph{involutive Heegaard Floer homology} \cite{HMInvolutive}, which we review in Section \ref{sec:iota}, as well as the notion of a knot-like complex, which we review in Section \ref{sec:knot-like}. We presume the reader is familiar with ordinary Heegaard Floer homology for three-manifolds \cite{OSDisks, OSProperties} and knots \cite{OSKnots, RasmussenKnots}.

\subsection{Iota-complexes and involutive Heegaard Floer homology} \label{sec:iota}

In this section we briefly introduce the structure of the involutive Heegaard Floer invariants, with a focus on the properties of local equivalence. 

We begin with certain algebraic definitions. Throughout, $\mathbb F$ denotes the field with two elements, $U$ is a variable of degree $-2$, and $\bF[U]_d$ is the graded module such that $\gr(1) = d$.

 \begin{define}
 	\label{def:iota-complex}
 	An \emph{iota-complex} (or \emph{$\iota$-complex}) $(C,\iota)$ is a chain complex $C$, which is free and finitely generated over $\bF[U]$, equipped with an endomorphism $\iota$.  Here $\bF$ is the field of $2$ elements, and $U$ is a formal variable with grading $-2$.  Furthermore, the following hold:
 	\begin{enumerate}
 		\item\label{iota-1} $C$ is equipped with a $\Z$-grading, compatible with the action of $U$.  We call this grading the \emph{Maslov} or \emph{homological} grading.
 		\item\label{iota-2} There is a grading-preserving isomorphism $U^{-1}H_*(C)\iso \bF[U,U^{-1}]$.
 		\item \label{iota-3} $\iota$ is a grading-preserving chain map and $\iota^2\simeq \id$.
 	\end{enumerate}
 \end{define}
 
Given two iota-complexes $(C_1, \iota_1)$ and $(C_2, \iota_2)$, a homogeneously graded $\bF[U]$-chain map $f \colon C_1 \to C_2$ is said to be an \emph{$\iota$-homomorphism} if $\iota_2\circ  f+f\circ  \iota_1\simeq 0$. Two iota-complexes $(C_1,\iota_1)$ and $(C_2,\iota_2)$ are called \emph{$\iota$-equivalent} if there is a homotopy equivalence $\Phi\colon C_1\to C_2$ which is an $\iota$-homomorphism. 

Heegaard Floer homology associates to any closed oriented $3$-manifold $Y$ equipped with a $\Spin^c$ structure $\frs$ an $\bF[U]$-chain complex $\CF^-(Y,\frs)$, well defined up to homotopy equivalence. If $\frs$ is self-conjugate, involutive Heegaard Floer homology considers the additional data of a homotopy involution $\iota$ on $\CF^-(Y,\frs)$. In the case that $Y$ is a rational homology $3$-sphere, $(\CF^-(Y,\frs),\iota)$ is an iota-complex. Hendricks and Manolescu \cite{HMInvolutive} prove that pair $(\CF^-(Y,\frs),\iota)$ is well defined up to the notion of iota-equivalence described above.

Continuing with algebra, the tensor product of iota-complexes $(C_1,\iota_1)$ and $(C_2,\iota_2)$ is given by 
 \begin{equation}
 (C_1,\iota_1)\otimes(C_2,\iota_2):=(C_1\otimes_{\bF[U]}C_2,\iota_1\otimes \iota_2).
\label{eq:group-relation-I}
 \end{equation}
  Moreover, Hendricks, Manolescu, and Zemke \cite{HMZConnectedSum} establish that 
 \[
 (\CF^-(Y_1\# Y_2,\frs_1\# \frs_2),\iota)\simeq (\CF^-(Y_1,\frs_1),\iota_1)\otimes (\CF^-(Y_2,\frs_2),\iota_2),
 \]
 where $\simeq$ denotes homotopy-equivalence of iota-complexes.

 \begin{define}\label{def:local-equivalence} Suppose $(C,\iota)$ and $(C',\iota')$ are two iota-complexes.
 	\begin{enumerate}
 		\item A \emph{local map} from $(C,\iota)$ to $(C',\iota')$ is a grading-preserving $\iota$-homomorphism $F\colon C\to C'$, which induces an isomorphism from $U^{-1} H_*(C)$ to $U^{-1} H_*(C')$.
 		\item We say that $(C,\iota)$ are $(C',\iota')$ are \emph{locally equivalent} if there is a local map from $(C,\iota)$ to $(C',\iota')$, as well as a local map from $(C',\iota')$ to $(C,\iota)$. We say that $(C, \iota)$ is \emph{locally trivial} if it is locally equivalent to $(\bF[U]_0, \mathrm{Id})$.
 	\end{enumerate}
 \end{define}
 The set of local equivalence classes forms an abelian group, denoted $\frI$, with product given by the operation $\otimes$ in equation~\eqref{eq:group-relation-I} \cite{HMZConnectedSum}*{Section~8}. Inverses are given by dualizing both the chain complex $C$ and the map $\iota$ with respect to $\bF[U]$. The map
 \[
 Y\mapsto [(\CF^-(Y), \iota)]
 \]
 determines a homomorphism from $\Theta_{\Z}^3$ to $\frI$ \cite{HMZConnectedSum}*{Theorem~1.8}. 

The local equivalence classes of nonzero integer surgeries on knots are computed in \cite[Theorem 1.6(2)]{HHSZExact}. For our purposes, the important case is the following.

\begin{lem}\label{lem:local-class-even} \cite[Theorem 1.6(2)]{HHSZExact} For $n>0$, the local equivalence class of $(\CF^-(S^3_{2n}(K),[n]),\iota)$ has the form
  \[
 \begin{tikzcd}[column sep={1cm,between origins},labels=description] 
A_{n}^-(K)
	\ar[dr, "v"]
& & A_n^-(K) \ar[dl,"v"]\\
& B_n^-(K)
\end{tikzcd}
\]
 where $A_n^-(K)$ and $B_n^-(K)$ are subcomplexes of the knot Floer complex of $K$, $v$ is a particular map between them, and the involution swaps the two copies of $A_n^-(K)$, and fixes $B_n^-(K)$. The gradings on the above are induced by the Maslov grading on the knot Floer complex, shifted up by the Heegaard Floer correction term $d(L(2n,1),[n])$ of the lens space $L(2n,1)$ in the corresponding $\Spin^c$ structure.
\end{lem}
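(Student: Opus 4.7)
The plan is to invoke the integer surgery formula for involutive Heegaard Floer homology proved in \cite{HHSZExact}, specializing it to $p=2n$ and the self-conjugate $\Spin^c$ structure $[n]$. That formula identifies $\CF^-(S^3_p(K),[t])$, together with its involution, as the mapping cone of a map $v+h\colon \bigoplus_{s\equiv t \pmod p} A_s^-(K) \to \bigoplus_{s\equiv t \pmod p} B_s^-(K)$, where $v_s\colon A_s^-\to B_s^-$ and $h_s\colon A_s^-\to B_{s-p}^-$, and where the involution on the cone is built from the knot Floer involution $\iota_K$ together with the reflection $s\mapsto -s$ on indices. In our case the summation indices run over $s\in\{\ldots,-3n,-n,n,3n,\ldots\}$, and for $|s|\ge 3n$ the corresponding $(A_s^-,B_s^-)$ blocks are locally trivial: one of $v_s,h_s$ is a quasi-isomorphism onto the tower $\bF[U]$ after inverting $U$, so these blocks do not affect the local equivalence class.

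The next step is to discard those trivial blocks, leaving a reduced cone built from $A_n^-, A_{-n}^-, B_n^-, B_{-n}^-$. The knot Floer symmetry induces identifications $A_{-n}^-(K)\simeq A_n^-(K)$ and $B_{-n}^-(K)\simeq B_n^-(K)$; after these the cone reduces to the tripod $A_n^-\xrightarrow{v} B_n^-\xleftarrow{v} A_n^-$ displayed in the lemma. The involution $\iota$, which on the surgery complex is the composition of $\iota_K$ with the index reflection $s\leftrightarrow -s$, then swaps the two $A_n^-$ summands (originating from $s=n$ and $s=-n$) and fixes the central $B_n^-$, as asserted.

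For the grading, I would specialize the argument to $K$ the unknot: in that case $S^3_{2n}(K)=L(2n,1)$, the tripod collapses to a single copy of $\bF[U]$, and the grading of its top class is by definition $d(L(2n,1),[n])$. Hence the overall grading shift applied to the reduced complex for general $K$ must be $d(L(2n,1),[n])$.

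The main obstacle in carrying this plan out rigorously is ensuring that the collapses of the tail blocks $(A_s^-,B_s^-)$ for $|s|\ge 3n$ are compatible with the \emph{full} involution, not just with $\iota_K$ on each individual summand. The reflection $s\leftrightarrow -s$ pairs tails on opposite ends, so the cancellation used to kill one tail must be mirrored by a corresponding cancellation on the opposite tail for the result to be an $\iota$-local equivalence rather than merely a local equivalence of underlying $\bF[U]$-complexes. Verifying this compatibility is the technical content of the involutive hypercube construction in \cite{HHSZExact}, and is where the bulk of the work lives.
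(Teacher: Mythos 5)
This lemma is not proved in the paper at all: it is stated as a direct citation of \cite{HHSZExact}*{Theorem 1.6(2)}, so there is no internal argument to compare against. Your outline is a faithful summary of how the cited result is established there --- the involutive mapping-cone surgery formula, truncation of the tail blocks compatibly with the index reflection $s\mapsto -s$, identification of $A_{-n}^-$ with $A_n^-$ via the knot Floer symmetry, and the grading shift pinned down by the lens-space case --- and you correctly locate the technical heart in the compatibility of the truncation with the full involution, which is precisely what the reference supplies. (Minor caveat: the tails that cancel are those with $|s|$ large relative to the genus, not literally $|s|\ge 3n$, but this does not affect the strategy.)
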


One straightforward corollary is the following:

\begin{cor}
\label{cor:locally-trivial-d=0} For $n>0$, if $K\subset S^3$ is a knot such that $d(A_n^-(K))=0$, then $(\CF^-(S_{2n}^3(K),[n]), \iota)$ is locally equivalent to $(\bF[U]_d,\id)$, where $d=d(L(2n,1),[n])$. 
\end{cor}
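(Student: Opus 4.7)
The plan is to invoke Lemma~\ref{lem:local-class-even} to reduce the claim to showing that the Y-shaped iota-complex $(C, \iota)$ described there is locally equivalent to $(\bF[U]_d, \id)$, where $d := d(L(2n,1), [n])$. I will exhibit local iota-equivariant maps in both directions.

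For the forward direction, the hypothesis $d(A_n^-(K)) = 0$ supplies a cycle $a \in A_n^-(K)$ of Maslov grading $0$ whose class generates $U^{-1}H_*(A_n^-(K))$. The element $\zeta := (a, a, 0) \in C$ is then a cycle (the two $v(a)$ contributions to $\partial \zeta$ cancel in characteristic~$2$), lies in grading $d$, is strictly fixed by $\iota$, and represents a generator of $U^{-1}H_*(C) \cong \bF[U, U^{-1}]$. The $\bF[U]$-linear extension sending $1 \mapsto \zeta$ defines a strict iota-equivariant local map $g \colon (\bF[U]_d, \id) \to (C, \iota)$.

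For the backward direction, let $\phi \colon A_n^-(K) \to \bF[U]_d$ be a local chain map sending $a$ to the top generator of $\bF[U]_d$. Since $v$ is a local map, $v(a) \in B_n^-(K)$ is a cycle representing a generator of $U^{-1}H_*(B_n^-(K))$, and one likewise obtains a local chain map $\psi \colon B_n^-(K) \to \bF[U]_d$ with $\psi(v(a)) = 1$. The chain maps $\phi$ and $\psi \circ v$ from $A_n^-(K)$ to $\bF[U]_d$ induce the same map on homology (both send the tower generator $[a]$ to $1$ and vanish on $U$-torsion, since $\bF[U]_d$ is torsion-free), so they are chain homotopic; fix $\tau \colon A_n^-(K) \to \bF[U]_d$ of degree $+1$ with $\phi + \psi \circ v = \tau \circ \partial$. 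Define $f \colon C \to \bF[U]_d$ by $f(x_L, x_R, y) := \phi(x_L)$ and $H \colon C \to \bF[U]_d$ by $H(x_L, x_R, y) := \psi(y) + \tau(x_L) + \tau(x_R)$. Then $f$ is a local chain map, and a direct computation gives $H \circ \partial_C = \phi(x_L) + \phi(x_R) = f\iota + f$, so $H$ is a null-homotopy of $f\iota + f$, making $f$ the required local iota-map.

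The main obstacle is the construction of the auxiliary maps $\psi$ and $\tau$, which rests on the fact that $v$ is a local map in the integer surgery formula, so that the cohomology classes of $\phi$ and $\psi \circ v$ coincide and the necessary chain-level compatibility can be arranged.
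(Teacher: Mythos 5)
Your overall strategy differs from the paper's: the paper normalizes the complex of Lemma~\ref{lem:local-class-even} by invoking the classification of finitely generated free chain complexes over $\bF[U]$ and performing a change of basis after which $v$ kills every two-step summand, so that the whole complex visibly retracts onto a standard three-generator model of $(\bF[U]_d,\id)$. You instead build explicit local $\iota$-maps in both directions. Your forward map $1\mapsto (a,a,0)$ is correct and clean: that element is a cycle, is strictly $\iota$-invariant, sits in grading $d$, and generates $U^{-1}H_*$ of the cone (the localized homology is exactly the diagonal of the two tower classes).

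The gap is in the backward direction, at the construction of $\tau$. Two grading-preserving chain maps $A_n^-(K)\to\bF[U]_d$ inducing the same map on homology need \emph{not} be chain homotopic: since $\bF[U]_d$ has zero differential, the homotopy class of a chain map into it is determined by the induced map on homology only up to a class in $\Ext^1_{\bF[U]}$ of the $U$-torsion of $H_*(A_n^-(K))$, and that group is nonzero whenever $A_n^-(K)$ has two-step summands. Concretely, if $A_n^-(K)$ contains a summand $\bF[U]\ve{a}\xrightarrow{U^i}\bF[U]\ve{b}$ and $(\phi+\psi\circ v)(\ve{a})=U^j$ with $j<i$ (which the gradings do not forbid), then any candidate $\tau$ would have to satisfy $U^i\tau(\ve{b})=U^j$, which is impossible in $\bF[U]_d$. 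So ``same map on homology, hence chain homotopic'' is not a valid inference here, and nothing in your argument controls the chain-level behavior of $\psi\circ v$ on the torsion summands. The repair is immediate and makes $\tau$ unnecessary: take $\phi:=\psi\circ v$ from the outset. This is a grading-preserving chain map, it is local because $\psi(v(a))=1$, and with this choice $f(x_L,x_R,y)=\psi(v(x_L))$ and $H(x_L,x_R,y)=\psi(y)$ satisfy $H\circ\d_C=\psi(v(x_L))+\psi(v(x_R))=f\iota+f$ directly. (This is in effect what the paper's change of basis accomplishes: after that basis change, $\psi\circ v$ \emph{is} the projection onto the tower.)
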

\begin{proof}  We note that $B_n^-(K)\simeq \bF[U]$, so using the same logic as in the proof of \cite[Proposition~3.24]{HHSZExact}, we may replace it with a copy of $\bF[U]$. By the classification theorem for finitely generated chain complexes over $\bF[U]$, we can write $A_n^-(K)$ as a sum of one tower $\bF[U]$, as well as some number of 2-step complexes of the form $\bF[U]\xrightarrow{U^i} \bF[U]$. Write $\xs_l$ and $\xs_r$ for tower generators of the two copies of $A_n^-(K)$, which is to say, generators for the copy of $\bF[U]$ in the basis chosen. The map $v$ sends $\xs_l$ and $\xs_r$ to a nonzero element of the tower $\bF[U]$. Since $d(A_n^-(K))=0$, we conclude that $v(\xs_l)=v(\xs_r)=1.$ On a two step subcomplex of the left copy of $A_n^-(K)$, say with generators $\ve{a}$ and $\ve{b}$ such that $\d(\ve{a})=U^i \ve{b}$, we must have $v(\ve{b})=0$ because $v$ is a chain map. If $v(\ve{a})$ is non-zero, then perform a change of basis, adding a multiple of $\ve{x}_l$ to $\ve{a}$. After this change of basis, $v(\ve{a})=0$. We do the same change of basis to the right copy of $A_n^-(K)$. After this change of basis, it becomes apparent that the complex in the statement of Lemma~\ref{lem:local-class-even} is locally equivalent to
\[
 \begin{tikzcd}[column sep={1cm,between origins},labels=description] 
\bF[U]_{d}
	\ar[dr, "1"]
& & \bF[U]_d\ar[dl,"1"]\\
& \bF[U]_d
\end{tikzcd},
\]
where the involution is reflection, and $d$ denotes $d(L(2n,1),[n])$. The above is homotopy equivalent to $(\bF[U]_d, \id)$, completing the proof.
\end{proof}

\subsection{Knot-like complexes} \label{sec:knot-like}

We now recall the standard notion of knot-like complexes in Heegaard Floer theory. There are many different variations on the definition in the literature due to many different authors. The earliest version is Hom's notion of $\veps$-equivalence \cite{HomEpsilon}. See also \cite{ZemConnectedSums, DHSTmore, DHST_Homology_Concordance} for other variations.

\begin{define} A \emph{knot-like} complex $C$ is a finitely generated, free chain complex over a 2-variable polynomial ring $\bF[U,V]$ satisfying the following:
\begin{enumerate}
\item $C$ is equipped with a $\Z\times \Z$-valued bigrading, denoted $(\gr_w,\gr_z)$, which has the property that $(\gr_w-\gr_z)/2$ is integrally valued. The variable $U$ has bigrading $(-2,0)$ and the variable $V$ has bigrading $(0,-2)$.
\item There is a grading preserving isomorphism $(U,V)^{-1} H_*(C)\iso \bF[U,V,U^{-1},V^{-1}]$.
\item $\d$ has bigrading $(-1,-1)$. 
\end{enumerate}
\end{define}

A local map from $C_0$ to $C_1$ (where $C_i$ are knot-like complexes) consists of a grading preserving $\bF[U,V]$ linear chain map $F\colon C_0\to C_1$ such that $F$ induces an isomorphism from $(U,V)^{-1} H_*(C_0)$ to $(U,V)^{-1} H_*(C_1)$. We say that $C_0$ and $C_1$ are \emph{locally equivallent} if there exist local maps from $C_0$ to $C_1$ and from $C_1$ to $C_0$.  A knot-like complex $C$ is \emph{locally-trivial} if it is locally equivalent to a rank one complex $\bF[U,V]$ wherein $1\in \bF[U,V]$ is concentrated in grading $(0,0)$. Note that $C$ is locally trivial if and only if there is an isomorphism
\[
C\iso \bF[U,V]\oplus A
\]
where $A$ is a summand of $C$ such that $(U,V)^{-1} H_*(A)\iso 0$. 

If $K\subset S^3$, the full version of the knot Floer complex $\CFK^-(K)$ is a knot-like complex. If $K$ is a slice knot, then $\CFK^-(K)$ is locally trivial.

\section{Proofs of the main results} \label{sec:proofs}

We first observe that if $Z$ is a symmetric splice, since $Z$ has order at most two in $\Theta^3_{\Z}$, we must have $d(Z)=0$ since $d$ is a homomorphism.

\begin{lem}
\label{lem:neg-def-spin-cobordism}
Suppose that $Z$ is the homology sphere obtained by splicing $(Y,K)$ and $(-Y,-K)$ using the gluing map $\phi_0^{+}$ from Proposition~\ref{prop:type-1-topology}, where $Y$ is a homology 3-sphere. Then there is a negative definite $\Spin$ cobordism $W$ from $Z$ to $\RP^3$. The 4-manifold $W$ has a unique self-conjugate $\Spin^c$ structure $\frs$. Furthermore, letting $d(W,\frs)$ denote the grading shift of the map associated to the cobordism $(W,\frs)$, we have
\[
d(W,\frs)=d(\RP^3, \frs|_{\RP^3})-d(Z).
\]
\end{lem}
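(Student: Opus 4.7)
The plan is to exhibit the cobordism $W$ explicitly via Kirby calculus, starting from the Kirby diagram of $Z$ in Figure~\ref{fig:Kirby-Y-n} for $n = 0$: $Z$ is zero surgery on the chain $K(0) - U_1(0) - U_2(0) - mK(0)$ in $S^3$. I would then construct $W$ by attaching a small number of handles to $Z \times [0, 1]$: specifically, a single 2-handle with framing $-2$ along a curve $\gamma \subset Z$, together with auxiliary cancelling 2-3 handle pairs (as needed) so that the top boundary simplifies to $\RP^3 = S^3_{-2}(U)$. The curve $\gamma$ is chosen to exploit the splice symmetry---for instance, a curve on the gluing torus, or one in the chain along which $-2$ surgery (combined with Kirby moves using the reversibility of $K$ and the mirror relation $K \leftrightarrow mK$) collapses the four-component chain to a single $-2$-framed unknot.

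Verifying that the resulting top boundary is indeed $\RP^3$ is the main obstacle; it requires careful bookkeeping of framings and linkings through the handle slides, using essentially the reversibility hypothesis and the mirror relation. Once this Kirby equivalence is in hand, the remaining properties of $W$ are immediate. The net intersection form of $W$ is $(-2)$, so $W$ is negative definite with $b_2^-(W) = 1$, $b_2^+(W) = 0$, $\sigma(W) = -1$, $b_1(W) = 0$, and $\chi(W) = 1$. Since $-2$ is even, $W$ is Spin, and since $H^2(W; \Z) \cong \Z$ has trivial $2$-torsion, $W$ has a unique self-conjugate $\Spin^c$ structure $\frs$, namely the Spin structure with $c_1(\frs) = 0$.

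Applying the Ozsv\'ath--Szab\'o grading shift formula for cobordism maps,
\[
d(W, \frs) = \frac{c_1(\frs)^2 - 2\chi(W) - 3\sigma(W)}{4} = \frac{0 - 2 + 3}{4} = \frac{1}{4}.
\]
On the other hand, since $Z \cong -Z$ via the splice symmetry, $Z$ has order at most $2$ in $\Theta^3_{\Z}$, and as $d$ is a homomorphism to the torsion-free group $\Q$, $d(Z) = 0$. The two $\Spin$ structures on $\RP^3 = L(2, 1)$ have $d$-invariants $\pm 1/4$; the restriction $\frs|_{\RP^3}$ is the one giving $1/4$, yielding the identity $d(W, \frs) = d(\RP^3, \frs|_{\RP^3}) - d(Z)$.
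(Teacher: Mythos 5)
There is a genuine gap at exactly the step you flag as ``the main obstacle,'' and it cannot be closed by Kirby calculus alone. After attaching a single $2$-handle to $Z\times[0,1]$ (the paper attaches a $-1$-framed handle along a meridian of the middle unknot in the chain of Figure~\ref{fig:Kirby-Y-n}, which after a blow-down realizes a $-2$-framed handle along $K\# mK$), the new boundary component is $(Y\# {-Y})_{-2}(K\# mK)$, \emph{not} $\RP^3$. These are diffeomorphic only when $K\# mK$ is an unknot, i.e.\ essentially never for nontrivial $K$ (e.g.\ for $K$ the trefoil one gets $-2$-surgery on the square knot, which is not a lens space). Cancelling $2$--$3$ handle pairs do not change the diffeomorphism type of the cobordism, so no choice of ``auxiliary'' handles of that kind rescues the claim that the top boundary ``collapses'' to a $-2$-framed unknot. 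The missing idea is the second stage of the paper's construction: $(Y\# {-Y}, K\# mK)$ is homology concordant to $(S^3,U)$, and the trace of this concordance induces a homology cobordism (with $b_1=b_2=0$, hence harmless for definiteness and the $\Spin$ condition) from $(Y\#{-Y})_{-2}(K\# mK)$ to $S^3_{-2}(U)=\RP^3$. The cobordism $W$ of the lemma is the composition of the handle attachment with this homology cobordism; without the concordance input the construction does not reach $\RP^3$.

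Your numerical computations ($\chi$, $\sigma$, the grading shift $1/4$, and $d(Z)=0$ from $Z\iso -Z$) are correct for the composite $W$ as well, since the homology cobordism piece contributes nothing to $\chi-1$, $\sigma$, or $b_2$. One smaller omission: you assert without argument that $\frs|_{\RP^3}$ is the $\Spin^c$ structure with $d=1/4$ rather than $-1/4$; the paper pins this down by noting that the Maslov grading lies in a single coset of $\Q/\Z$ for each $\Spin^c$ structure, and the shift $1/4$ applied to the integral gradings of $\CF^-(Z)$ forces the image to land in the coset $1/4+\Z$, which matches only the $\Spin^c$ structure on $\RP^3$ with $d$-invariant $1/4$. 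You should include some such justification, since the displayed identity $d(W,\frs)=d(\RP^3,\frs|_{\RP^3})-d(Z)$ depends on it.
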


\begin{proof}We begin with the Kirby calculus presentation from Figure~\ref{fig:Kirby-Y-n}. We can blow-down one of the unknots to obtain a Kirby calculus description of $Z$ as Dehn surgery on a knot $K\# H\#{-K}\subset Y\#{-Y}$, where $H$ denotes a Hopf link. That is, we add a clasp  between $K$ and $-K$. (Note that the sign of the clasp is not important since $(Y,K)$ is reversible). The two components are given framing $0$.  We now blow up the clasp to obtain the 3 component link $K\cup U\cup mK$ with clasps between the components. Each component is given framing $-1$. There is a cobordism $X$ from $Z$ to a manifold $Z'$ by performing $-1$ surgery on a meridian of the unknot $U$, where this knot is given Seifert framing $-2$ inside of $Z$. The result is $-2$ surgery on $K\# {-K}\subset Y\# {-Y}$. The pair $(Y\#{-Y}, K\# {-K})$ is homology concordant to $(S^3,U)$. Therefore $Z'$ admits a homology cobordism to $\R \P^3$, viewed as $-2$ surgery on the unknot. Let $W$ denote the composition of these two cobordisms. The cobordism $W$ is shown in Figure~\ref{fig:5}.

We observe that $d(Z)=0$ since $Z\iso-Z$. On the other hand, we compute that the shift in grading for the $\Spin$ structure on $W$ is 
\[
\frac{-2\chi(W)-3\sigma(W)}{4}=\frac{1}{4}.
\]
We note that the $d$ invariants of the two $\Spin^c$ structures ($=\Spin$ structures) on $\RP^3$ are $1/4$ and $-1/4$. Since the Maslov grading takes values in a single coset of $\Q/ \Z$ in each $\Spin^c$ structure, it follows that  the $\Spin$ structure on $W$ restricts to the $\Spin^c$ structure on $\RP^3$ which has $d$-invariant $1/4$.
\end{proof}

\begin{figure}[h]
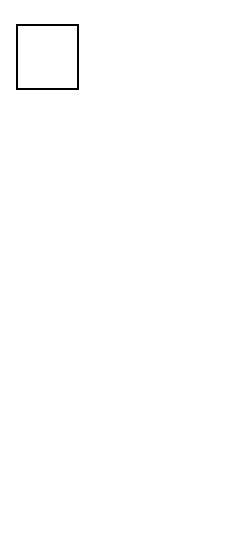
\caption{The cobordism $W$ from $Z$ to $\RP^3$.}
\label{fig:5}
\end{figure}

Using Lemma~\ref{lem:neg-def-spin-cobordism}, we now prove Theorem~\ref{thm:symmetric-splices}(1), which concerns symmetric splices of Type-1:

\begin{proof} [Proof of Theorem~\ref{thm:symmetric-splices}(1)] Let $Z$ be a symmetric splice of Type-1. By Proposition~\ref{prop:type-1-topology}, $Z$ can be written as a splice $\Sp_{\phi_n^{\pm}}(K,mK)$ for some pair $(Y,K)$, where $Y$ is a homology sphere and $(Y,K)$ is reversible. By Lemmas~\ref{lem:change-sign} and~\ref{lem:reduce-n}, we may assume that $n=0$ by changing $(Y,K)$ appropriately. Applying Lemma~\ref{lem:neg-def-spin-cobordism} gives a local map from $(\CF^-(Z), \iota)$ to $(\CF^-(\RP^3,\frs), \iota)$ for the $\Spin$ structure $\frs$ with $d(\RP^3, \frs)=1/4$. Since the grading shift of the cobordism map is also $1/4$, and $(\CF^-(\RP^3,\frs), \iota) \iso (\bF[U]_{1/4},\id)$, we conclude that there is a local map from $(\CF^-(Z), \iota)$ to the trivial complex. Dualizing and using the fact that $Z\iso -Z$ gives a local map in the opposite direction. \end{proof}

\begin{proof} [Proof of Theorem~\ref{thm:symmetric-splices}(2)] 
The proof is similar to the proof of Theorem~\ref{thm:symmetric-splices}(1). By Proposition~\ref{prop:type-2-topology},  the 3-manifold $Z$ can be written as $\Sp_{\phi_0^+}(K_0,K_1)$ where $K_0,K_1\subset S^3$ are positive and negative amphichiral knots, respectively. By adapting the argument from Theorem~\ref{thm:symmetric-splices}(1), we obtain a negative definite $\Spin$ cobordism from $Z$ to $S_{-2}^{3}(K_0\# K_1)$ which shifts the Maslov grading by $1/4$. Assuming (up to relabeling) that $K_0$ is positive amphichiral and $K_1$ is negative amphichiral, we observe that
\[
S^3_{-2}(K_0\# K_1)\iso -S^3_{+2}(K_0\# {-}K_1),
\]
where $-K_1$ denotes $K_1$ with its string orientation reversed. Therefore, Corollary~\ref{cor:locally-trivial-d=0} implies that
\[
(\CF^-(S^3_{+2}(K_0\# {-}K_1),[1]), \iota)\sim_{\mathrm{loc}} (\bF[U]_{-1/4}, \id).
\]
Dualizing, we obtain that
\[
(\CF^-(S_{-2}^3(K_0\# K_1),[1]), \iota)\sim_{\mathrm{loc}} (\bF[U]_{1/4}, \id).
\]
It follows that there is a local map from $(\CF^-(Z), \iota)$ to $(\bF[U],\id)$. Since $Z\iso -Z$, we conclude that $(\CF^-(Z), \iota)$ is locally trivial.
 \end{proof}

We now prove of Proposition~\ref{prop:topology-intro}, most of which we have already proven:
\begin{proof}[Proof of Proposition~\ref{prop:topology-intro}]
 Part (1) follows from Lemma~\ref{lem:neg-def-spin-cobordism}, above. Part~(3) is similar, and is described in our proof of Theorem~\ref{thm:symmetric-splices}(2). Finally Part~(2) is obtained by composing the cobordism from Part~(1) with the natural negative definite cobordism from $\RP^3$ to $\emptyset$, namely the disk bundle over $S^2$ with Euler number $-2$. 
\end{proof} 

\begin{proof}[Proof of Proposition~\ref{prop:pin2}]
	The argument is essentially identical to the proof of Part (1) of Theorem~\ref{thm:symmetric-splices}, but where the notation is adjusted to be for Seiberg-Witten Floer spectra in the setting of \cite{ManolescuPin2Triangulation}.  In particular, local equivalence of $\Pin(2)$-equivariant spectra is defined just as in Definition~\ref{def:local-equivalence} above, except $\Pin(2)$-equivariant spectra take the place of iota-complexes (see  \cite{StoffregenSeifertFibered}*{Definition 2.7} and surrounding discussion).  
	
	Let $Z$ be a symmetric splice of Type-1.  Lemma~\ref{lem:neg-def-spin-cobordism} gives a local map \[\Sigma^{\frac{1}{16}\mathbb{H}}\mathit{SWF}(Z)\to \mathit{SWF}(\mathbb{RP}^3,\frs),\]
	with $\frs$ as in the proof of Theorem~\ref{thm:symmetric-splices}. We refer the reader to \cite{ManolescuSWF} for the definition of the (formal) fractional suspension.  Meanwhile, $\mathit{SWF}(\mathbb{RP}^3,\frs)=S^{\frac{1}{16}\mathbb{H}}$, and so we have a local map $\mathit{SWF}(Z)\to S^0$.  Using that $Z\cong -Z$, we have a local map $\mathit{SWF}(-Z)\to S^0$; furthermore, for general integer homology spheres $X$, we have $\mathit{SWF}(X)$ and $\mathit{SWF}(-X)$ are Spanier-Whitehead dual.  As a consequence, if there is a local map $\mathit{SWF}(-Z)\to S^0$ then there is a local map $S^0\to \mathit{SWF}(Z)$.  Thus $S^0\leq \mathit{SWF}(Z)\leq S^0$ in local equivalence, and so $\mathit{SWF}(Z)$ is locally trivial as a $\mathrm{Pin}(2)$-spectrum.
\end{proof}

\bibliographystyle{custom}
\def\MR#1{}
\bibliography{biblio}

\end{document}